\documentclass{article}
\usepackage[utf8]{inputenc}
\usepackage{mathtools}
\usepackage{amsfonts}
\usepackage{amssymb}
\usepackage{dsfont}
\usepackage{mathrsfs}
\usepackage{yhmath}
\usepackage{hyperref}
\usepackage{stmaryrd}
\usepackage{amsthm}
\newtheorem{theorem}{Theorem}[section]
\newtheorem{lemma}[theorem]{Lemma}
\newtheorem{definition}[theorem]{Definition}
\newtheorem{corollary}[theorem]{Corollary}
\newtheorem{proposition}[theorem]{Proposition}

\title{Solving the Cahn-Hilliard equation with additive noise}
\author{Joe Ghafari}

\begin{document}

\maketitle

\begin{abstract}
We prove local well-posedness of the Cahn-Hilliard equation with additive noise.
Our method relies on paracontrolled calculus and the Da Prato-Debussche trick.
\end{abstract}

\textbf{Keywords:} Stochastic Cahn-Hilliard equation, Paracontrolled calculus, stochastic partial differential equations

\textbf{2020 Mathematics Subject Classification:}
35R60, 60H15, 60H17, 60L40.

\section{Introduction}
Unless otherwise stated, in this paper we fix $d \in \mathbb{N}^*$ and we denote by $\mathbb{T}^d$ the $d$-dimensional torus.

The stochastic Cahn-Hilliard equation on $\mathbb{T}^d$ is given by: $$\partial_r f+\Delta^2 f=\Delta (f^3-f)+\xi,$$ where $\xi$ is a space-time white noise.

In recent years, stochastic partial differential equations have been under intensive study.

Da Prato and Debussche \cite{MR2016604} first solved the $2$-dimensional stochastic quantization equation. Hairer \cite{MR3071506} proved local well-posedness of the KPZ equation using rough paths theory. Hairer \cite{MR3274562} solved the $3$-dimensional stochastic quantization equation employing his new theory of regularity structures. Catellier and Chouk \cite{MR3846835} proved the existence and uniqueness of a local-in-time solution to the $3$-dimensional stochastic quantization using paracontrolled distributions developped recently by Gubinelli, Imkeller, and Perkowski \cite{MR3406823}. Gubinelli and Perkowski \cite{MR3592748} solved the KPZ equation using paracontrolled calculus.

The purpose of this article is to prove local well-posedness of the stochastic Cahn-Hilliard equation in space dimensions $1,2,3,$ and $4$ using the Da Prato-Debussche trick and paracontrolled calculus.

The reader is referred to \cite{MR1359472} for another proof of the existence and uniqueness of a solution to the stochastic Cahn-Hilliard equation in space dimensions $1,2,$ and $3.$

The article is organized as follows. In section \ref{section2} we introduce Besov spaces and their properties. In section \ref{section3} we provide some probabilistic preliminaries. Finally, in section \ref{section4} we solve the stochastic Cahn-Hilliard equation. 
\section{Besov spaces and paraproducts}
\label{section2}
In this section we recall the definitions and the basic results of paracontrolled calculus (see \cite{MR2768550,MR3406823,MR3445609} for the proofs).
\subsection{Tempered distributions}
\begin{definition}
Let $(q,m) \in [1,+\infty[\times \mathbb{N}^*.$
\begin{enumerate}
\item We denote by $L^2((\mathbb{R}_+\times \mathbb{T}^d)^m)$ the set of functions $g:(\mathbb{R}_+\times \mathbb{T}^d)^m\longrightarrow\mathbb{R}$ such that $g$ is a $(\mathcal{B}((\mathbb{R}_+\times \mathbb{T}^d)^m),\mathcal{B}(\mathbb{R}))$-measurable function and $$\int_{(\mathbb{R}_+\times \mathbb{T}^d)^m}\!|g(x)|^2\,\mathrm{d}x<+\infty.$$
\item We denote by $L^q(\mathbb{T}^d)$ the set of functions $g:\mathbb{T}^d\longrightarrow\mathbb{R}$ such that $g$ is a $(\mathcal{B}(\mathbb{T}^d),\mathcal{B}(\mathbb{R}))$-measurable function and $$\int_{\mathbb{T}^d}\!|g(x)|^q\,\mathrm{d}x<+\infty.$$
We also define
\begin{align*} 
\Vert\cdot\Vert_{L^q(\mathbb{T}^d)}: L^q(\mathbb{T}^d) &\longrightarrow \mathbb{R} \\ h & \longmapsto \Vert h\Vert_{L^q(\mathbb{T}^d)}=(\int_{\mathbb{T}^d}\!|h(x)|^q\,\mathrm{d}x)^{\frac{1}{q}}
\end{align*}
\item We denote by $L^\infty(\mathbb{T}^d)$ the set of functions $g:\mathbb{T}^d\longrightarrow\mathbb{R}$ such that $g$ is a $(\mathcal{B}(\mathbb{T}^d),\mathcal{B}(\mathbb{R}))$ and there exists $c \in \mathbb{R}_+,|g(x)|\leq c$ $\mathrm{d}x$-almost everywhere on $\mathbb{T}^d.$
\\
We also define
\begin{align*} 
\Vert\cdot\Vert_{L^\infty(\mathbb{T}^d)}: L^\infty(\mathbb{T}^d) &\longrightarrow \mathbb{R} \\ h & \longmapsto \Vert h\Vert_{L^\infty(\mathbb{T}^d)}=\mathrm{ess\,sup}_{x \in \mathbb{T}^d}|h(x)|
\end{align*}
\end{enumerate}
\end{definition}
\begin{definition}
We write $C^{\infty}(\mathbb{T}^d,\mathbb{R})$ for the set of functions $h:\mathbb{T}^d\longrightarrow\mathbb{R}$ such that $h$ is infinitely differentiable on $\mathbb{T}^d$ and we denote by $C_c^{\infty}(\mathbb{R}^d,\mathbb{R})$ the set of functions $g:\mathbb{R}^d\longrightarrow\mathbb{R}$ such that $g$ is infinitely differentiable on $\mathbb{R}^d$ and $\overline{\{x \in \mathbb{R}^d,g(x)\neq 0\}}$ is a compact set in $\mathbb{R}^d.$
\end{definition}
\begin{definition}
For all $j \in \mathbb{N},$ let
\begin{align*} 
\Vert\cdot\Vert_{j,\mathscr{S}}: C^{\infty}(\mathbb{T}^d,\mathbb{R})&\longrightarrow \mathbb{R} \\ \phi & \longmapsto \Vert \phi\Vert_{j,\mathscr{S}}=\max_{\substack{\mu \in \mathbb{N}^d\\ |\mu|\leq j}}\sup_{x \in \mathbb{T}^d}|\partial^\mu\phi(x)|
\end{align*}
\begin{enumerate}
\item The Schwartz space is the vector space $\mathscr{S}:=C^\infty(\mathbb{T}^d,\mathbb{R})$ equipped with the topology generated by the family of semi-norms $(\Vert\cdot\Vert_{j,\mathscr{S}})_{j \in \mathbb{N}}.$
\item The space of tempered distributions is the vector space $\mathscr{S}':=\{f \in \mathbb{R}^{\mathscr{S}},\exists (c,q) \in \mathbb{R}_+\times \mathbb{N},\forall \phi \in \mathscr{S},|f(\phi)|\leq c\Vert\phi\Vert_{q,\mathscr{S}}\}\cap\{f \in \mathbb{R}^{\mathscr{S}},\forall (\alpha,\phi_1\phi_2) \in \mathbb{R}\times\mathscr{S}^2,f(\alpha\phi_1+\phi_2)=\alpha f(\phi_1)+f(\phi_2)\}$ equipped with the weak$^*$ topology $\sigma(\mathscr{S}',\mathscr{S}).$
\item Let $(f_j)_{j \in \mathbb{N}}$ be a sequence in $\mathscr{S}'.$ We say that $(f_j)_{j \in \mathbb{N}}$ converges in $\mathscr{S}'$ if and only if there exists $f \in \mathscr{S}'$ such that for all $\phi \in \mathscr{S},\lim_{q\to+\infty}f_q(\phi)=f(\phi).$
\end{enumerate}
\end{definition}
\begin{definition}
Let $f \in \mathscr{S}'$ and $h:\mathbb{Z}^d\longrightarrow\mathbb{C}$ be a function such that $$\exists (c,q) \in \mathbb{R}_+\times \mathbb{N},\forall x \in \mathbb{Z}^d,|h(x)|\leq c(1+|x|)^q.$$
\begin{enumerate}
\item The Fourier transform of $f$ is given by
\begin{align*}
\mathscr{F}f: \mathbb{Z}^d &\longrightarrow \mathbb{C} \\ 
m & \longmapsto \mathscr{F}f=f(\cos(2\pi\langle \mathrm{Id}_{\mathbb{T}^d},m\rangle))-\mathrm{i}f(\sin(2\pi\langle\mathrm{Id}_{\mathbb{T}^d},m\rangle))
\end{align*}
\item The inverse Fourier transform of $h$ is given by
\begin{align*} 
\mathscr{F}^{-1}h: \mathscr{S} &\longrightarrow \mathbb{C} \\ \phi & \longmapsto \mathscr{F}^{-1}h(\phi)=\sum_{n \in \mathbb{Z}^d}h(n)\int_{\mathbb{T}^d}\!\phi(x)\mathrm{e}^{2\pi\mathrm{i}\langle n,x\rangle}\,\mathrm{d}x
\end{align*}
\end{enumerate}
\end{definition}
\subsection{Besov spaces}
\begin{definition}
A dyadic partition of unity consists of two non-negative radial functions $\rho_{-1}$ and $\rho_0$ such that $(\rho_{-1},\rho_{0}) \in (C_c^\infty(\mathbb{R}^d,\mathbb{R}))^2,\overline{\{y \in \mathbb{R}^d,\rho_{-1}(y)\neq 0\}}\subset\{y \in \mathbb{R}^d,|y|\leq \frac{4}{3}\},\overline{\{y \in \mathbb{R}^d,\rho_{0}(y)\neq 0\}}\subset\{y \in \mathbb{R}^d,\frac{3}{4}\leq |y|\leq \frac{8}{3}\},$ and $$\forall x \in \mathbb{R}^d,\rho_{-1}(x)+\sum_{q \in \mathbb{N}}\rho_{0}(2^{-q}x)=1.$$
\end{definition}
For the rest of this article we fix a dyadic partition of unity $(\rho_{-1},\rho_0)$ and let
\begin{align*} 
K_{-1}: \mathbb{T}^d &\longrightarrow \mathbb{R}
\\ x & \longmapsto K_{-1}(x)=\sum_{m \in \mathbb{Z}^d}\rho_{-1}(m)\cos(2\pi\langle x,m\rangle)
\end{align*}
For every $q \in \mathbb{N},$ we define
\begin{align*} 
\rho_{q}: \mathbb{R}^d &\longrightarrow \mathbb{R} \\ x & \longmapsto \rho_{q}(x)=\rho_0(2^{-q}x)
\end{align*}
and
\begin{align*} 
K_{q}: \mathbb{T}^d &\longrightarrow \mathbb{R} \\ 
x & \longmapsto K_{q}(x)=\sum_{m \in \mathbb{Z}^d}\rho_q(m)\cos(2\pi\langle x,m\rangle)
\end{align*}
\begin{definition}
For every $(f,q) \in \mathscr{S}'\times (\mathbb{N}\cup\{-1\}),$ we define the Littlewood-Paley blocks of $f$ by $\Delta_{q}f:=K_q*f.$
\end{definition}
\begin{definition}
Let $(\alpha,\gamma,r)\in \mathbb{R}\times [1,+\infty[^2,$
\begin{align*} 
\Vert\cdot\Vert_{\alpha}: \mathscr{S}' &\longrightarrow \overline{\mathbb{R}}_+ \\ f & \longmapsto \Vert f\Vert_{\alpha}=\sup_{q \in \mathbb{N}}2^{\alpha(q-1)}\Vert \Delta_{q-1}f\Vert_{L^{\infty}(\mathbb{T}^d)}
\end{align*}
and
\begin{align*} 
\Vert\cdot\Vert_{B^{\alpha}_{r,\gamma}}: \mathscr{S}' &\longrightarrow \overline{\mathbb{R}}_+ \\ f & \longmapsto \Vert f\Vert_{B^{\alpha}_{r,\gamma}}=(\sum_{q \in \mathbb{N}}2^{\alpha\gamma(q-1)}\Vert \Delta_{q-1} f\Vert_{L^r(\mathbb{T}^d)}^\gamma)^{\frac{1}{\gamma}}
\end{align*}
We define $$B_{r,\gamma}^\alpha:=\{f \in \mathscr{S}',\Vert f\Vert_{B_{r,\gamma}^\alpha}<+\infty\}$$ and $$\mathscr{C}^\alpha:=\{f \in \mathscr{S}',\lim_{q\to+\infty}2^{\alpha q}\Vert\Delta_q f\Vert_{L^{\infty}(\mathbb{T}^d)}=0\}.$$
The Banach spaces $(B^\alpha_{r,\gamma},\Vert\cdot\Vert_{B^\alpha_{r,\gamma}})$ and $(\mathscr{C}^\alpha,\Vert\cdot\Vert_{\alpha})$ are called Besov spaces.
\end{definition}
\begin{definition}
Let $(\alpha,U) \in \mathbb{R}\times \mathbb{R}_+.$ We denote by $C_U\mathscr{C}^\alpha$ the set of functions $h:[0,U]\longrightarrow\mathscr{C}^\alpha$ such that $h$ is continuous on $[0,U].$
We also define
\begin{align*} 
\Vert\cdot\Vert_{C_U\mathscr{C}^\alpha}: C_U\mathscr{C}^\alpha &\longrightarrow \mathbb{R} \\ f & \longmapsto \Vert f\Vert_{C_U\mathscr{C}^\alpha}=\sup_{r \in [0,U]}\Vert f(r)\Vert_{\mathscr{C}^\alpha}
\end{align*}
\end{definition}
We note that for every $(\alpha,U) \in \mathbb{R}\times \mathbb{R}_+,(C_U\mathscr{C}^\alpha,\Vert\cdot\Vert_{C_U\mathscr{C}^\alpha})$ is a Banach space.

The following Bernstein inequalities are very useful when dealing with functions with compactly supported Fourier transform.
\begin{theorem}[Bernstein inequalities]
\label{Bernstein}
Let $\mu\in \mathbb{N}^d$ and $(\gamma_1,\gamma_2,\gamma_3) \in (\mathbb{R}_+^*)^3$ such that $\gamma_2<\gamma_3.$
\begin{enumerate}
\item There exists $c \in \mathbb{R}_+^*$ such that for any $(q,\beta) \in [1,+\infty[\times\mathbb{R}_+^*$ and all $f \in L^q(\mathbb{T}^d),$ if $\{x \in \mathbb{Z}^d,\int_{\mathbb{T}^d}\!f(y)\mathrm{e}^{-2\pi\mathrm{i}\langle x,y\rangle}\,\mathrm{d}y\neq 0\}\subset \{x \in \mathbb{R}^d,|x|\leq \beta\gamma_1\},$ then $$\Vert \partial^\mu f\Vert_{L^\infty(\mathbb{T}^d)}\leq c\beta^{\frac{d}{q}+|\mu|}\Vert f\Vert_{L^q(\mathbb{T}^d)}.$$
\item There exists $(c_1,c_2) \in (\mathbb{R}_+^*)^2$ such that for any $\beta \in \mathbb{R}_+^*$ and all $f \in L^\infty(\mathbb{T}^d),$ if $\{x \in \mathbb{Z}^d,\int_{\mathbb{T}^d}\!f(y)\mathrm{e}^{-2\pi\mathrm{i}\langle x,y\rangle}\,\mathrm{d}y\neq 0\}\subset\{x \in \mathbb{R}^d,\beta\gamma_2\leq |x|\beta\gamma_3\},$ then $$c_1\beta^{|\mu|}\Vert f\Vert_{L^\infty(\mathbb{T}^d)}\leq \Vert \partial^{\mu}f\Vert_{L^\infty(\mathbb{T}^d)}\leq c_2\beta^{|\mu|}\Vert f\Vert_{L^{\infty}(\mathbb{T}^d)}.$$
\end{enumerate}
\end{theorem}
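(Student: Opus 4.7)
The plan is to prove both inequalities by writing $f$ as a convolution of itself with a Schwartz kernel obtained from a smooth Fourier-space cutoff, and then applying Young's convolution inequality together with a rescaling argument.

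\textbf{Part 1.} I would fix $\psi \in C_c^\infty\left(\mathbb{R}^d,\mathbb{R}\right)$ identically equal to $1$ on $\left\{\xi \in \mathbb{R}^d : |\xi| \leq \gamma_1\right\}$ and vanishing outside a slightly larger ball, then set $\psi_\beta := \psi\left(\cdot/\beta\right)$. Since $\psi_\beta \equiv 1$ on the spectral support of $f$, the associated torus kernel
$$K_\beta\left(x\right) := \sum_{n \in \mathbb{Z}^d} \psi_\beta\left(n\right) \cos\bigl(2\pi\langle n,x\rangle\bigr)$$
satisfies $K_\beta * f = f$, so that $\partial^\mu f = \left(\partial^\mu K_\beta\right) * f$. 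Young's convolution inequality then gives $\left\Vert\partial^\mu f\right\Vert_{L^\infty\left(\mathbb{T}^d\right)} \leq \left\Vert\partial^\mu K_\beta\right\Vert_{L^{q'}\left(\mathbb{T}^d\right)} \left\Vert f\right\Vert_{L^q\left(\mathbb{T}^d\right)}$ with $q' = q/\left(q-1\right)$. Using Poisson summation to compare $K_\beta$ with $\beta^d\left(\mathscr{F}_{\mathbb{R}^d}^{-1}\psi\right)\left(\beta\,\cdot\right)$ restricted to $\mathbb{T}^d$, a direct change of variables yields $\left\Vert\partial^\mu K_\beta\right\Vert_{L^{q'}\left(\mathbb{T}^d\right)} \lesssim \beta^{|\mu|+d/q}$ uniformly in $\beta \geq 1$ with a constant depending only on $\psi, d, \mu$ and $q$; the case $\beta < 1$ is immediate since only finitely many Fourier modes contribute.

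\textbf{Part 2.} For the upper bound I would repeat the argument of Part 1 with $\psi$ replaced by an annular cutoff $\chi \in C_c^\infty\left(\mathbb{R}^d,\mathbb{R}\right)$ supported in $\left\{\xi \in \mathbb{R}^d : \gamma_2/2 \leq |\xi| \leq 2\gamma_3\right\}$ and equal to $1$ on $\left\{\xi \in \mathbb{R}^d : \gamma_2 \leq |\xi| \leq \gamma_3\right\}$, taking $q = \infty$. For the lower bound I would exploit that $\chi$ is supported away from the origin, so that $\chi\left(\xi\right)/|\xi|^{2|\mu|}$ is itself smooth and compactly supported; expanding $|\xi|^{2|\mu|}$ via the multinomial formula expresses $f$ as a finite sum of convolutions of appropriately rescaled Schwartz kernels, each of $L^1$-norm $\lesssim \beta^{-2|\mu|}$, against higher order derivatives of $f$ which are in turn controlled by the upper half of Part 2 applied at order $|\mu|$. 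Combining these two half-bounds yields the claimed reverse inequality.

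The main subtlety will be the lower bound in Part 2: the naive candidate multiplier $\chi\left(\xi\right)/\left(i\xi\right)^\mu$ is not smooth at points of the support of $\chi$ where $\left(i\xi\right)^\mu$ vanishes, so one cannot directly invert $\partial^\mu$ by a single convolution. Going through $|\xi|^{2|\mu|}$ circumvents this at the cost of having to re-apply the upper bound in order to absorb the extra derivatives produced by the multinomial expansion, and carrying the $\beta$-scalings cleanly through these two successive applications is the most delicate bookkeeping point of the argument.
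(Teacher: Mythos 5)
The paper itself does not prove Theorem \ref{Bernstein}; it is quoted from the references listed at the start of Section \ref{section2}, so there is no in-paper argument to compare yours against. Your strategy --- reproducing $f$ by convolution with a rescaled smooth Fourier cutoff, applying Young's inequality, and controlling the periodized kernel via Poisson summation --- is the standard textbook proof (essentially Lemma 2.1 of Bahouri--Chemin--Danchin transplanted to $\mathbb{T}^d$), and it is sound for Part 1 and for the upper bound in Part 2, at least for $\beta$ bounded away from $0$, which is all that is ever used. One caveat: Part 1 as literally stated fails as $\beta\to 0$ when $\mu=0$, since a nonzero constant $f$ has spectrum $\left\{0\right\}\subset\left\{x,\left|x\right|\leq\beta\gamma_1\right\}$ for every $\beta>0$ while $\Vert f\Vert_{L^\infty\left(\mathbb{T}^d\right)}\leq c\beta^{\frac{d}{q}}\Vert f\Vert_{L^q\left(\mathbb{T}^d\right)}$ cannot hold uniformly for small $\beta$; so your remark that the case $\beta<1$ is ``immediate'' is not accurate, though the defect lies in the statement rather than in your method.

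The genuine gap is in the lower bound of Part 2. Your inversion through $\left|\xi\right|^{2\left|\mu\right|}$, carried out correctly, yields $\Vert f\Vert_{L^\infty\left(\mathbb{T}^d\right)}\leq C\beta^{-\left|\mu\right|}\max_{\left|\nu\right|=\left|\mu\right|}\Vert\partial^\nu f\Vert_{L^\infty\left(\mathbb{T}^d\right)}$: the maximum over all multi-indices of length $\left|\mu\right|$ appears unavoidably once you expand $\left|\xi\right|^{2\left|\mu\right|}=\left(\xi_1^2+\dots+\xi_d^2\right)^{\left|\mu\right|}$ by the multinomial theorem, because each resulting term carries a different derivative $\partial^{2\nu}f$. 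This is the correct classical statement, but it is strictly weaker than the displayed claim, which controls $\Vert f\Vert_{L^\infty\left(\mathbb{T}^d\right)}$ by the single derivative $\partial^\mu f$. The single-$\mu$ claim is in fact false for $d\geq 2$: take $d=2$, $\mu=\left(1,0\right)$, and $f\left(x_1,x_2\right)=\cos\left(2\pi Nx_2\right)$, whose spectrum $\left\{\left(0,N\right),\left(0,-N\right)\right\}$ lies in the prescribed annulus for $\beta$ comparable to $N$, yet $\partial^\mu f=0$ while $\Vert f\Vert_{L^\infty\left(\mathbb{T}^2\right)}=1$. So your closing sentence ``combining these two half-bounds yields the claimed reverse inequality'' cannot be made to work as written; what your argument actually proves is the corrected statement with $\max_{\left|\nu\right|=\left|\mu\right|}\Vert\partial^\nu f\Vert_{L^\infty\left(\mathbb{T}^d\right)}$ in place of $\Vert\partial^\mu f\Vert_{L^\infty\left(\mathbb{T}^d\right)}$, and the theorem (and its later invocation with $\mu=\left(q,0,\dots,0\right)$ in the proof of Lemma \ref{lemma}) should be adjusted accordingly.
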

The following two theorems are simple applications of Theorem \ref{Bernstein}.
\begin{theorem}[Besov embedding]
\label{embedding}
There exists $c \in \mathbb{R}_+^*$ such that $$\forall(f,\alpha,q) \in \mathscr{S}'\times \mathbb{R}\times [1,+\infty[,\Vert f\Vert_{\alpha-\frac{d}{q}}\leq c\Vert f\Vert_{B^\alpha_{q,d}}.$$
\end{theorem}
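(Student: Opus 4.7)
The plan is to reduce the statement to a single application of part 1 of Theorem \ref{Bernstein} at each dyadic scale. For every $j\in\mathbb{N}$, the kernel $K_{j-1}$ is a trigonometric polynomial whose Fourier coefficients are supported where $\rho_{j-1}$ is, hence $\Delta_{j-1}f=K_{j-1}*f$ has Fourier support contained in the ball of radius $\frac{8}{3}\cdot 2^{j-1}$ (the case $j=0$ being absorbed by taking $\beta=\frac{1}{2}$, since $\rho_{-1}$ is supported in a ball of radius $\frac{4}{3}$). Applying part 1 of Theorem \ref{Bernstein} with $\mu=0$ and $\beta=2^{j-1}$ therefore produces a constant $c$, independent of $j$ and of $f$, such that
\[
\left\Vert\Delta_{j-1}f\right\Vert_{L^{\infty}(\mathbb{T}^d)}\leq c\,2^{(j-1)d/q}\left\Vert\Delta_{j-1}f\right\Vert_{L^{q}(\mathbb{T}^d)}.
\]
If the right-hand side of the embedding inequality is infinite there is nothing to prove, so we may assume $\|f\|_{B^\alpha_{q,d}}<+\infty$, in which case each $\Delta_{j-1}f$ indeed lies in $L^q(\mathbb{T}^d)$ and Bernstein applies.

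Next, I would multiply the estimate by $2^{(\alpha-d/q)(j-1)}$ to make the exponents match, obtaining
\[
2^{(\alpha-d/q)(j-1)}\left\Vert\Delta_{j-1}f\right\Vert_{L^{\infty}(\mathbb{T}^d)}\leq c\,2^{\alpha(j-1)}\left\Vert\Delta_{j-1}f\right\Vert_{L^{q}(\mathbb{T}^d)},
\]
and then take the supremum over $j\in\mathbb{N}$ on the left to recover $\|f\|_{\alpha-d/q}$. To finish, I would invoke the elementary embedding $\ell^{d}\hookrightarrow\ell^{\infty}$, i.e.\ the bound $\sup_j a_j\leq\bigl(\sum_j a_j^{d}\bigr)^{1/d}$ valid for any nonnegative sequence $(a_j)$, applied to $a_j=2^{\alpha(j-1)}\|\Delta_{j-1}f\|_{L^q(\mathbb{T}^d)}$. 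This yields $\|f\|_{\alpha-d/q}\leq c\|f\|_{B^\alpha_{q,d}}$ with the same constant $c$.

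I do not expect a genuine obstacle: the entire content of the statement lives in Bernstein's inequality, and the remaining manipulations are purely algebraic. The only point that deserves verification is the Fourier-support claim for $\Delta_{j-1}f$, which follows from the convolution theorem on the torus together with the definition of $K_{j-1}$ via $\rho_{j-1}$; once this is in hand, choosing the constant $\gamma_1$ in Theorem \ref{Bernstein} large enough (for instance $\gamma_1=\frac{8}{3}$) makes the chain of inequalities above uniform in $j$.
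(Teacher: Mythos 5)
Your proof is correct and is exactly the argument the paper intends: the paper omits the proof, remarking only that the theorem is a simple application of Theorem \ref{Bernstein}, and your blockwise use of the first Bernstein inequality with $\mu=0$ followed by the $\ell^{d}\hookrightarrow\ell^{\infty}$ bound is the standard way to carry that out. The handling of the $j=0$ block via $\beta=\tfrac12$ and the uniformity of the Bernstein constant in the Lebesgue exponent are both checked correctly, so there is nothing to add.
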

\begin{theorem}
\label{derivative}
For every $\mu \in \mathbb{N}^d,$ there exists $c \in \mathbb{R}_+^*$ such that $$\forall (f,\alpha) \in \mathscr{S}'\times \mathbb{R},\Vert\partial^\mu f\Vert_{\alpha-|\mu|}\leq c\Vert f\Vert_{\alpha}$$
\end{theorem}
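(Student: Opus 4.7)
The statement is a direct consequence of applying the second Bernstein inequality to each Littlewood--Paley block. First I would observe that since $K_{q-1}$ is a finite trigonometric polynomial (by construction, only finitely many modes in its expansion carry a nonzero weight), convolution commutes with differentiation in $\mathscr{S}'$, so $\partial^\mu \Delta_{q-1} f = \Delta_{q-1} \partial^\mu f$ for every $q \in \mathbb{N}$. The task thus reduces to establishing a block-wise inequality of the form $\|\Delta_{q-1}\partial^\mu f\|_{L^\infty} \lesssim 2^{(q-1)|\mu|} \|\Delta_{q-1} f\|_{L^\infty}$ with a constant independent of $q$ and $f$.

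For the high-frequency blocks $q \geq 1$, the Fourier coefficients of $\Delta_{q-1} f$ are supported on integer points in the annulus $\{3/4 \cdot 2^{q-1} \leq |x| \leq 8/3 \cdot 2^{q-1}\}$ by the support conditions imposed on $\rho_{-1}, \rho_0$ in the definition of a dyadic partition of unity. I would apply Theorem~\ref{Bernstein}(2) with $\beta = 2^{q-1}$, $\gamma_2 = 3/4$, $\gamma_3 = 8/3$ to the function $\Delta_{q-1} f$, yielding
$$\|\partial^\mu \Delta_{q-1} f\|_{L^\infty(\mathbb{T}^d)} \leq c_2 \, 2^{(q-1)|\mu|} \|\Delta_{q-1} f\|_{L^\infty(\mathbb{T}^d)}.$$

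The only step that requires separate attention is the low-frequency block $q = 0$, i.e.\ $\Delta_{-1} f$, whose Fourier support lies in the ball $\{|x| \leq 4/3\}$ and therefore consists of finitely many integer modes. On this finite-dimensional space of trigonometric polynomials, $\partial^\mu$ is a bounded operator in $L^\infty(\mathbb{T}^d)$, so $\|\partial^\mu \Delta_{-1} f\|_{L^\infty} \leq c \|\Delta_{-1} f\|_{L^\infty}$ with a constant depending only on $\mu$ and $d$; alternatively one can deduce this from Theorem~\ref{Bernstein}(1). Since $2^{(\alpha - |\mu|)(-1)} = 2^{|\mu|} \cdot 2^{-\alpha}$ differs from $2^{\alpha(-1)}$ only by a factor involving $|\mu|$, this case fits the same scheme.

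Combining the two regimes, for every $q \in \mathbb{N}$,
$$2^{(\alpha - |\mu|)(q-1)} \|\Delta_{q-1} \partial^\mu f\|_{L^\infty(\mathbb{T}^d)} \leq c \, 2^{\alpha(q-1)} \|\Delta_{q-1} f\|_{L^\infty(\mathbb{T}^d)},$$
and taking the supremum over $q \in \mathbb{N}$ gives exactly $\|\partial^\mu f\|_{\alpha - |\mu|} \leq c \|f\|_\alpha$. I do not expect any genuine obstacle: the entire argument is a one-line consequence of the reverse Bernstein inequality at the dyadic scale $2^{q-1}$, together with a trivial finite-dimensional estimate at the lowest frequency.
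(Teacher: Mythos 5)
Your proposal is correct and is exactly the argument the paper intends: the paper gives no written proof, stating only that Theorem~\ref{derivative} is a ``simple application'' of the Bernstein inequalities, and your block-wise use of Theorem~\ref{Bernstein}(2) at scale $2^{q-1}$ for $q\geq 1$, together with the separate finite-dimensional treatment of the block $\Delta_{-1}f$ and the harmless extra factor $2^{|\mu|}$ in its weight, is the standard and complete way to carry that out.
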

\subsection{Paraproducts and resonant products}
The main difficulty is solving singular stochastic partial differential equations is to multiply distributions. Paraproducts are bilinear operations useful to decompose the multiplication into simpler problems. 
\begin{definition}
Let $(f,g) \in (\mathscr{S}')^2.$
\begin{enumerate}
\item If $(\sum_{j=1}^q\sum_{n=0}^{j-1}\Delta_{n-1}f\Delta_{j}g)_{q \in \mathbb{N}^*}$ converges in $\mathscr{S}',$ then the limit is denoted by $f\varolessthan g=g\varogreaterthan f=\sum_{q \in \mathbb{N}^*}\sum_{n=0}^{q-1}\Delta_{n-1}f\Delta_{q}g$ and it is called paraproduct. 
\item If $(\sum_{j=0}^q\Delta_{j-1}f(\Delta_{\max(j-2,-1)}g+\Delta_{j-1}g+\Delta_{j}g))_{q \in \mathbb{N}}$ converges in $\mathscr{S}',$ then the limit is denoted by $f\varodot g=\sum_{q \in \mathbb{N}}\Delta_{q-1}f(\Delta_{\max(q-2,-1)}g+\Delta_{q-1}g+\Delta_qg)$ and it is called resonant product.
\end{enumerate}
\end{definition}
Bony \cite{MR0631751} noticed that paraproducts are always well-defined distributions. The only problem in constructing the product of distributions is the resonant term. The following estimates gives the basic result about these bilinear operations.
\begin{theorem}
\begin{enumerate}
\item For all $\alpha \in \mathbb{R}^*,$ there exists $c \in \mathbb{R}_+^*$ such that for all $\beta \in \mathbb{R},$ $$\forall (f,h) \in \mathscr{C}^\alpha\times \mathscr{C}^\beta,\Vert f\varolessthan g\Vert_{\min(\alpha,0)+\beta}\leq c\Vert f\Vert_{\alpha}\Vert h\Vert_{\beta}.$$
\item For every $(\alpha,\beta) \in \mathbb{R}^2,$ if $\alpha+\beta>0,$ then $$\exists c \in \mathbb{R}_+^*,\forall (f,h) \in \mathscr{C}^\alpha\times \mathscr{C}^\beta,\Vert f\varodot h\Vert\leq c\Vert f\Vert_{\alpha}\Vert h\Vert_{\beta}.$$
\end{enumerate}
\end{theorem}
\begin{corollary}
\label{product}
For all $(\alpha,\beta) \in \mathbb{R}^2,$ if $\alpha+\beta>0,$ then there exists $c \in \mathbb{R}_+^*$ such that for every $(f,h) \in \mathscr{C}^\alpha\times \mathscr{C}^\beta,fh \in \mathscr{C}^{\min(\alpha,\beta)}$ and $$\Vert fh\Vert_{\min(\alpha,\beta)}\leq c\Vert f\Vert_{\alpha}\Vert h\Vert_{\beta}.$$
\end{corollary}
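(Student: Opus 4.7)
The strategy is to apply Bony's paraproduct decomposition
\[
fh = f \varolessthan h + f \varodot h + h \varolessthan f
\]
and to bound each of the three summands in the Besov space $\mathscr{C}^{\alpha+\beta}$. Since $\alpha+\beta>0$ forces $\max(\alpha,\beta)>0$, one has $\alpha+\beta \geq \min(\alpha,\beta)$, and the elementary monotonicity $\left\Vert g\right\Vert_{\delta} \leq c\left\Vert g\right\Vert_{\gamma}$ (for $\gamma \geq \delta$, with constant $2^{\gamma-\delta}$, immediate from the definition of $\left\Vert \cdot\right\Vert_{\gamma}$) transfers those bounds to the target space $\mathscr{C}^{\min(\alpha,\beta)}$.

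The resonant term is handled directly by part (3) of the preceding theorem, yielding $\left\Vert f \varodot h\right\Vert_{\alpha+\beta} \leq c\left\Vert f\right\Vert_{\alpha}\left\Vert h\right\Vert_{\beta}$. For the two paraproducts I would argue by cases on the regularity of the low-frequency factor: when it is strictly positive, part (2) applies directly and gives a $\mathscr{C}^{\alpha+\beta}$ bound; when it is strictly negative, I invoke the standard companion of (2), whose proof sums $\left\Vert \Delta_{n-1}g\right\Vert_{L^{\infty}(\mathbb{T}^d)} \lesssim 2^{-\gamma n}\left\Vert g\right\Vert_{\gamma}$ over $n<q$ (a geometric series convergent precisely because $\gamma<0$) against $\left\Vert \Delta_{q} g'\right\Vert_{L^{\infty}(\mathbb{T}^d)} \lesssim 2^{-\gamma' q}\left\Vert g'\right\Vert_{\gamma'}$, thereby producing a bound of the same form $\left\Vert g \varolessthan g'\right\Vert_{\gamma+\gamma'} \lesssim \left\Vert g\right\Vert_{\gamma}\left\Vert g'\right\Vert_{\gamma'}$. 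Summing the three pieces via the triangle inequality then yields the announced estimate.

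The main obstacle is the borderline case where a paraproduct's low-frequency factor has regularity exactly $0$, since neither parts (1)--(2) as stated nor the negative-index companion estimate applies cleanly. I would handle this either by approximating the offending index by a slightly smaller strictly negative value (still preserving $\alpha+\beta>0$) and passing to the limit in the uniform estimates, or by a direct argument exploiting the defining property $\lim_{q\to+\infty} 2^{0 \cdot q}\left\Vert \Delta_{q} g\right\Vert_{L^{\infty}(\mathbb{T}^d)}=0$ of $\mathscr{C}^{0}$, which ensures the partial sums of the low-frequency blocks grow only sub-polynomially. Convergence of the resulting Littlewood-Paley series in $\mathscr{S}'$ then confirms $fh$ is a well-defined tempered distribution and completes the proof.
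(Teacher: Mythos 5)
The paper offers no proof of this corollary at all, and your argument is essentially the intended one: Bony's decomposition $fh=f\varolessthan h+f\varodot h+h\varolessthan f$, part (3) for the resonant term (this is where $\alpha+\beta>0$ is used), the paraproduct estimates for the other two terms, and the monotonicity of the norms together with $\alpha+\beta\geq\min\left(\alpha,\beta\right)$ to land everything in $\mathscr{C}^{\min\left(\alpha,\beta\right)}$. Your treatment of the borderline index $0$ by perturbing to a slightly negative regularity is also fine, since only one of the two low-frequency factors can have regularity $0$ when $\alpha+\beta>0$.

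One caveat on the case where the low-frequency factor has strictly positive regularity $\gamma>0$: part (2) of the preceding theorem, as printed, cannot give a $\mathscr{C}^{\gamma+\gamma'}$ bound there, because a paraproduct never has better regularity than its high-frequency factor (take the low-frequency factor constant to see this); the hypothesis $\alpha\in\mathbb{R}_+^*$ in part (2) is evidently a misprint for $\alpha\in\mathbb{R}_-^*$, i.e.\ precisely the ``companion estimate'' you re-derive by summing the geometric series. In the positive case you should instead use part (1) together with the embedding $\mathscr{C}^{\gamma}\hookrightarrow L^{\infty}\left(\mathbb{T}^d\right)$ for $\gamma>0$ (summing $\left\Vert\Delta_{q}g\right\Vert_{L^{\infty}\left(\mathbb{T}^d\right)}\lesssim 2^{-\gamma q}\left\Vert g\right\Vert_{\gamma}$ over $q$), which places the paraproduct in $\mathscr{C}^{\gamma'}$ with $\gamma'\geq\min\left(\alpha,\beta\right)$; the announced estimate follows unchanged. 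With that substitution your proof is complete, up to the routine verification that the limiting Littlewood--Paley tails vanish so that $fh$ lies in $\mathscr{C}^{\min\left(\alpha,\beta\right)}$ as defined here and not merely in the corresponding $B^{\min\left(\alpha,\beta\right)}_{\infty,\infty}$.
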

\subsection{Schauder estimates}
For all $(r,f)\in\mathbb{R}_+^*\times \mathscr{S}',$ let $P_0f:=f,$
\begin{align*} 
\eta_{r}: \mathbb{T}^d &\longrightarrow \mathbb{R} \\ 
x & \longmapsto \eta_{r}(x)=\sum_{q \in \mathbb{Z}^d}\cos(2\pi\langle x,q\rangle)\mathrm{e}^{-r|2\pi q|^4}
\end{align*}
and $P_rf:=\eta_r*f.$

We study here the regularizing effect of the semi-group $(P_q)_{q \in \mathbb{R}_+}.$

The following lemma is crucial for the proof of Theorem \ref{Schauder} below.
\begin{lemma}
\label{lemma}
\begin{enumerate}
\item For every $\beta \in \mathbb{R}_+,$ there exists $c \in \mathbb{R}_+^*$ such that for all $\alpha \in \mathbb{R},$ $$\forall (r,f) \in \mathbb{R}_+^*\times \mathscr{C}^\alpha,\Vert P_rf\Vert_{\alpha+\beta}\leq c(r)\max(r^{-\frac{\beta}{4}},1)\Vert f\Vert_{\alpha}.$$
\item For all $r \in \mathbb{R}_+^*,$ let $h \in C_c^\infty(\mathbb{R}^d,\mathbb{R})$ and 
\begin{align*}
h_r:\mathbb{T}^d&\longrightarrow\mathbb{R}\\
x&\longmapsto h_r(x)=\sum_{q \in \mathbb{Z}^d}h(rq)\cos(2\pi\langle x,q\rangle)
\end{align*}
such that $h(0)=1.$ For every $\alpha \in \mathbb{R},$ if $g \in \mathscr{C}^\alpha,$ then $\lim_{\epsilon\to0}\Vert h_\epsilon*g-g\Vert_{\alpha}=0$
\end{enumerate}
\end{lemma}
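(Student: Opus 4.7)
The plan is to use the Littlewood--Paley decomposition and treat $P_r$ (respectively convolution by $h_\epsilon$) as a Fourier multiplier, reducing both parts of the lemma to a uniform bound on the convolution kernel of the symbol restricted to each dyadic block.

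For part 1, I would fix $q \geq 1$ and write $\Delta_{q-1}(P_r f) = G_{q-1,r} * \Delta_{q-1} f$, where $G_{q-1,r}$ has Fourier coefficients $\widetilde{\rho}_{q-1}(n)\, e^{-r|2\pi n|^4}$ for a slightly enlarged bump $\widetilde{\rho}_{q-1}$ equal to $1$ on $\mathrm{supp}\,\rho_{q-1}$. On the support of $\widetilde{\rho}_{q-1}$ one has $|n| \sim 2^{q-1}$, so the symbol is bounded above by $e^{-c r 2^{4q}}$ for some $c>0$. The core analytic step is the uniform $L^1$ estimate $\|G_{q-1,r}\|_{L^1(\mathbb{T}^d)} \leq C\, e^{-c r 2^{4q}}$, obtained by rescaling $n = 2^q m$ to a fixed annulus, applying Poisson summation to compare the torus kernel with its Euclidean counterpart, and integrating by parts to trade smoothness of $\rho_0$ against spatial decay. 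Young's inequality then yields $\|\Delta_{q-1}(P_r f)\|_{L^\infty(\mathbb{T}^d)} \leq C\, e^{-c r 2^{4q}} \|\Delta_{q-1} f\|_{L^\infty(\mathbb{T}^d)}$, with the corresponding trivial estimate for $q = 0$ since the symbol is there bounded. Multiplying by $2^{(\alpha+\beta)(q-1)}$, taking a supremum over $q$, and setting $s = c r 2^{4q}$ reduces matters to $r^{-\beta/4} \sup_{s \geq c r} s^{\beta/4} e^{-s} \cdot \|f\|_{\alpha}$, which, splitting $r \leq 1$ and $r > 1$, is bounded by $C \max(r^{-\beta/4}, 1)\,\|f\|_{\alpha}$.

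For part 2, the strategy is an $\epsilon/2$-argument combining a uniform block bound with pointwise vanishing on each dyadic annulus. The same multiplier/kernel analysis applied to $h(\epsilon n)\,\widetilde{\rho}_{q-1}(n)$, now using only boundedness of $h$ together with its Schwartz-class derivatives, produces $\|\Delta_{q-1}(h_\epsilon * g)\|_{L^\infty(\mathbb{T}^d)} \leq C \|\Delta_{q-1} g\|_{L^\infty(\mathbb{T}^d)}$ with $C$ independent of $\epsilon$ and $q$. Given $\delta > 0$, I exploit the crucial vanishing condition $\lim_{q \to +\infty} 2^{\alpha q}\|\Delta_q g\|_{L^\infty(\mathbb{T}^d)} = 0$ built into the definition of $\mathscr{C}^\alpha$ to pick $Q \in \mathbb{N}$ so that $2^{\alpha(q-1)}\|\Delta_{q-1} g\|_{L^\infty(\mathbb{T}^d)} < \delta/(2(C+1))$ for all $q > Q$; the tail then contributes at most $\delta/2$ to $\|h_\epsilon * g - g\|_{\alpha}$ uniformly in $\epsilon$. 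For each of the finitely many $q \leq Q$, the Fourier spectrum of $\Delta_{q-1} g$ is contained in a compact set $A_q \subset \mathbb{Z}^d$, and since $h$ is continuous with $h(0) = 1$, $h(\epsilon n) \to 1$ uniformly in $n \in A_q$ as $\epsilon \to 0$. A finite-sum dominated-convergence argument on the Fourier side yields $\|\Delta_{q-1}(h_\epsilon * g - g)\|_{L^\infty(\mathbb{T}^d)} \to 0$ for each such $q$, so $\epsilon$ small enough controls the low-frequency contribution by $\delta/2$, and the two bounds combine to give $\|h_\epsilon * g - g\|_{\alpha} \to 0$.

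The main obstacle will be the uniform $L^1$ kernel bound in part 1: one must carefully track how rescaling interacts with periodicity. The Euclidean-analogue kernel $\int_{\mathbb{R}^d} \rho_0(m)\, e^{-r 2^{4q} |2\pi m|^4}\, e^{2\pi i \langle 2^q m,\, x\rangle}\, \mathrm{d}m$ is manifestly Schwartz in $x$ and carries the required decay in $r 2^{4q}$, but transferring this to the torus via Poisson summation, while keeping every constant $q$-independent, is where the delicate book-keeping lies. Once this kernel estimate is in hand, both parts follow from routine $L^\infty$-multiplier manipulations together with the defining features of the Besov scale $\mathscr{C}^\alpha$.
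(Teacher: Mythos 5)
Your proposal is correct, and for part 1 it takes a genuinely different route from the paper. The paper fixes $q=\lfloor\beta\rfloor$ and $\mu=(q,0,\dots,0)$, applies the two-sided Bernstein inequality to trade $\left\Vert\Delta_jP_rf\right\Vert_{L^\infty}$ for $2^{-jq}\left\Vert\partial^\mu\Delta_jP_rf\right\Vert_{L^\infty}$, bounds the latter by Young's inequality against $\left\Vert\partial^\mu\eta_r\right\Vert_{L^1(\mathbb{T}^d)}\lesssim\max(r^{-q/4},1)$ for the \emph{global} kernel, and then recovers the fractional exponent $\beta$ by interpolating blockwise between the orders $q$ and $q+1$ through the factor $\min(2^{1-j}r^{-1/4},1)$. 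You instead localize the semigroup to each dyadic annulus and prove the single exponential bound $\left\Vert G_{q-1,r}\right\Vert_{L^1}\lesssim e^{-cr2^{4q}}$, which yields all $\beta$ at once via $\sup_{s}s^{\beta/4}e^{-s}<\infty$ and avoids both the reverse Bernstein inequality and the two-exponent interpolation; the price is the annulus kernel estimate (Poisson summation plus integration by parts, with the polynomial factors $r|n|^3$ from differentiating the symbol absorbed into the exponential), though the paper's unproved input $\left\Vert\partial^\mu\eta_r\right\Vert_{L^1}\lesssim r^{-q/4}$ requires essentially the same kind of kernel work, so neither route is strictly more elementary. For part 2 the two arguments share the same two-scale structure and the same uniform-in-$\epsilon$ $L^1$ control of $h_\epsilon$ (which the paper extracts from $\int_{\mathbb{R}^d}\left|\mathscr{F}^{-1}h\right|$); the only difference is that the paper handles the finitely many low blocks in physical space via uniform continuity and the approximate-identity property, whereas you handle them as finite Fourier sums using $h(\epsilon n)\to1$, which is marginally cleaner since each $\Delta_{q-1}g$ is a trigonometric polynomial.
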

\begin{proof}
\begin{enumerate}
\item Let $\beta \in \mathbb{R}_+,q:=\left\lfloor \beta\right\rfloor,$ and $\mu \in \mathbb{N}^d$ such that $\mu=(q,0,...,0).$

It follows from Theorem \ref{Bernstein} that there exists $(c_1,c_2) \in (\mathbb{R}_+^*)^2$ such that for all $(\alpha,r) \in \mathbb{R}\times \mathbb{R}_+^*,$
\begin{align*}
\forall f \in \mathscr{C}^\alpha,\Vert P_r f\Vert_{\alpha+q}&\leq\max(2^{-\alpha}\Vert\Delta_{-1}P_rf\Vert_{L^\infty(\mathbb{T}^d)},\sup_{j \in \mathbb{N}}2^{j(\alpha+q)}\Vert\Delta_j P_rf\Vert_{L^{\infty}(\mathbb{T}^d)})\\
&\leq\max(2^{-\alpha}\Vert\eta_r\Vert_{L^1(\mathbb{T}^d)}\Vert\Delta_{-1}f\Vert_{L^\infty(\mathbb{T}^d)},c_1\sup_{j \in \mathbb{N}}2^{\alpha j}\Vert \partial^\mu\Delta_j P_rf\Vert_{L^\infty(\mathbb{T}^d)})\\
&\leq\max(\Vert f\Vert_{\alpha},c_1\sup_{j \in \mathbb{N}}2^{\alpha j}\Vert \partial^\mu\eta_{r}\Vert_{L^1(\mathbb{T}^d)}\Vert\Delta_jf\Vert_{L^{\infty}(\mathbb{T}^d)})\\
&\leq c_2\Vert f\Vert_{\alpha}\max(r^{-\frac{q}{4}},1).
\end{align*}
and $$\forall f \in \mathscr{C}^\alpha,\Vert P_rf\Vert_{\alpha+q+1}\leq c_2\Vert f\Vert_{\alpha}\max(r^{-\frac{q+1}{4}},1).$$
Since for any $\alpha \in \mathbb{R}$ and all $(j,r,f) \in \mathbb{N}\times \mathbb{R}_+^*\times \mathscr{C}^\alpha,$
\begin{align*}
\Vert\Delta_{j-1}P_rf\Vert_{L^\infty(\mathbb{T}^d)}&\leq c_2\Vert f\Vert_{\alpha}2^{-(j-1)(\alpha+q)}\min(2^{1-j}\max(r^{-\frac{q+1}{4}},1),\max(r^{-\frac{q}{4}},1))\\
&\leq c_2\Vert f\Vert_{\alpha}2^{-(j-1)(\alpha+q)}\max(r^{-\frac{q}{4}},1)\min(2^{1-j}r^{-\frac{1}{4}},1),
\end{align*}
We conclude that for every $\alpha \in \mathbb{R},$
\begin{align*}
\forall(r,f) \in \mathbb{R}_+^*\times \mathscr{C}^\alpha,\Vert P_rf\Vert_{\alpha+\beta} &\leq c_2\max(r^{-\frac{q}{4}},1)r^{\frac{q-\beta}{4}}\Vert f\Vert_{\alpha}\\
&\leq c_2\max(r^{-\frac{\beta}{4}},r^{\frac{q-\beta}{4}})\Vert f\Vert_{\alpha}\\
&\leq c_2r^{-\frac{\beta}{4}}\max(r^{\frac{\beta}{4}},1)\Vert f\Vert_{\alpha}\\
&\leq c_2\max(r^{-\frac{\beta}{4}},1)\Vert f\Vert_{\alpha}.
\end{align*}
\item Let $\alpha \in \mathbb{R},g \in \mathscr{C}^\alpha,$ and $(\epsilon_q)_{q \in \mathbb{N}}$ be a sequence in $\mathbb{R}_+^*$ such that $\lim_{q\to+\infty}\epsilon_q=0.$

For all $(n,j) \in \mathbb{N}^2,$ we have
\begin{align*}
\Vert h_{\epsilon_n}*g-g\Vert_{\alpha}&\leq\max_{0\leq m\leq j+1}2^{\alpha(m-1)}\Vert h_{\epsilon_n}*\Delta_{m-1}f-\Delta_{m-1} f\Vert_{L^{\infty}(\mathbb{T}^d)}\\
& \phantom{\leq}\ +\sup_{m \in \mathbb{N}\cap[j+1,+\infty[}2^{m\alpha}\Vert h_{\epsilon_n}*\Delta_mf-\Delta_mf\Vert_{L^{\infty}(\mathbb{T}^d)}\\
&\leq\max_{0 \leq m\leq j+1}2^{\alpha(m-1)}\Vert h_{\epsilon_n}*\Delta_{m-1}f-\Delta_{m-1}f\Vert_{L^\infty(\mathbb{T}^d)}\\
& \phantom{\leq}\ +(1+\int_{\mathbb{R}^d}\!|\int_{\mathbb{R}^d}\!h(y)\mathrm{e}^{2\pi\mathrm{i}\langle x,y\rangle}\,\mathrm{d}y|\,\mathrm{d}x)\sup_{m \in \mathbb{N}\cap[j+1,+\infty[}2^{m\alpha}\Vert\Delta_mf\Vert_{L^{\infty}(\mathbb{T}^d)}.
\end{align*}
Since for every $q \in \mathbb{N},\Delta_{q-1}f$ is uniformly continuous on $\mathbb{T}^d,$ it follows that $$\forall j \in \mathbb{N},\limsup_{n\to+\infty}\Vert h_{\epsilon_n}*g-g\Vert_{\alpha}\leq (1+\int_{\mathbb{R}^d}\!|\int_{\mathbb{R}^d}\!h(y)\mathrm{e}^{2\pi\mathrm{i}\langle x,y\rangle}\,\mathrm{d}y|\,\mathrm{d}x)\sup_{m \in \mathbb{N}\cap [j+1,+\infty[}2^{m\alpha}\Vert \Delta_mf\Vert_{L^\infty(\mathbb{T}^d)}.$$
Taking $j\to+\infty$ we deduce that $\lim_{n\to+\infty}\Vert h_{\epsilon_n}*g-g\Vert_{\alpha}=0.$
\end{enumerate}
\end{proof}
We derive next Schauder estimates for the semi-group $(P_r)_{r \in \mathbb{R}_+}.$
\begin{theorem}[Schauder estimates]
\label{Schauder}
\begin{enumerate}
\item For every $\beta \in ]-\infty,4[,$ there exists $c \in \mathbb{R}_+^*$ such that for all $(\alpha,U) \in \mathbb{R}\times \mathbb{R}_+,$ $$\forall g \in C_{U}\mathscr{C}^\alpha,\sup_{r \in [0,U]}\Vert\int_{0}^r\!P_{r-q}(g(q))\,\mathrm{d}q\Vert_{\alpha+\beta}\leq c\max(U,U^{1-\frac{\beta}{4}})\Vert g\Vert_{\alpha}.$$
\item Let $\gamma \in \mathbb{R},f\in \mathscr{C}^\gamma,$ and 
\begin{align*}
h:\mathbb{R}_+&\longrightarrow\mathscr{S}'\\
r&\longmapsto h(r)=P_rf
\end{align*}
There exists $c \in \mathbb{R}_+^*$ such that for every $U \in \mathbb{R}_+,h\in C_U\mathscr{C}^\gamma$ and $\Vert h\Vert_{C_U\mathscr{C}^\gamma}\leq c\Vert f\Vert_{\alpha}.$
\end{enumerate}
\end{theorem}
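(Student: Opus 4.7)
For part 1, the strategy is to apply Lemma \ref{lemma}(1) pointwise in the time variable and then integrate. Fix $\beta<4$. For $\beta\in[0,4)$, Lemma \ref{lemma}(1) directly gives
\[
\|P_{r-q}(g(q))\|_{\alpha+\beta}\leq c\max\bigl((r-q)^{-\beta/4},1\bigr)\|g(q)\|_{\alpha};
\]
for $\beta<0$ the same bound holds because $\|\cdot\|_{\alpha+\beta}\leq c\|\cdot\|_{\alpha}$ (observing that $2^{\beta(q-1)}\leq\max(2^{-\beta},1)$ for $\beta\leq 0$ and $q\in\mathbb{N}$) combined with Lemma \ref{lemma}(1) applied with exponent $0$. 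Since each Littlewood--Paley block commutes with the Bochner integral and $L^\infty$ satisfies Minkowski's inequality, a standard argument yields
\[
\left\|\int_0^r P_{r-q}(g(q))\,\mathrm{d}q\right\|_{\alpha+\beta}\leq \int_0^r \|P_{r-q}(g(q))\|_{\alpha+\beta}\,\mathrm{d}q\leq c\|g\|_{C_U\mathscr{C}^{\alpha}}\int_0^r \max\bigl(s^{-\beta/4},1\bigr)\,\mathrm{d}s
\]
after the substitution $s=r-q$. The assumption $\beta<4$ guarantees integrability of $s^{-\beta/4}$ at the origin, and a short case analysis (splitting the range at $s=1$) bounds the remaining integral by $c'\max(r,r^{1-\beta/4})$. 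Taking the supremum over $r\in[0,U]$ yields the claimed estimate.

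For part 2, the uniform bound $\sup_{r\in[0,U]}\|P_r f\|_{\gamma}\leq c\|f\|_{\gamma}$ follows immediately from Lemma \ref{lemma}(1) with $\beta=0$. Continuity of $r\mapsto P_r f$ on $[0,U]$ at any point $r_0>0$ reduces, via the semigroup identity $P_r=P_{|r-r_0|}P_{\min(r,r_0)}$ and the preceding uniform bound, to continuity at $r=0$. For continuity at $0$, I would split the supremum defining $\|P_r f-f\|_{\gamma}$ into a low-frequency part with $q\leq N$ and a high-frequency tail with $q>N$. The tail is controlled by $c\sup_{q>N}2^{\gamma q}\|\Delta_q f\|_{L^\infty(\mathbb{T}^d)}$, which vanishes as $N\to\infty$ by the definition of $\mathscr{C}^{\gamma}$. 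For the remaining low-frequency blocks, compactness of the support of $\rho_{q-1}$ together with the discreteness of $\mathbb{Z}^d$ implies that $\Delta_{q-1}f$ is a trigonometric polynomial, so $\Delta_{q-1}(P_r f-f)$ has finitely many Fourier coefficients, each of the form $(e^{-r|2\pi m|^4}-1)\rho_{q-1}(m)\mathscr{F}f(m)\to 0$ as $r\to 0^+$. Choosing $N$ large first and then $r$ small completes the argument.

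The main obstacle is the continuity at $0$: Lemma \ref{lemma}(2) does not directly apply since the heat-like kernel $\eta_r$ corresponds to $y\mapsto e^{-|2\pi y|^4}$, which is not compactly supported. However, the compactness of the Fourier support of each $\rho_{q-1}$, combined with the discreteness of the dual lattice $\mathbb{Z}^d$, forces each Littlewood--Paley block to reduce to a finite trigonometric sum on the torus, and this finite-dimensional collapse provides a convenient replacement for the compact-support hypothesis of Lemma \ref{lemma}(2).
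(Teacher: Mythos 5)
Your argument is correct, and for part 1 it coincides with the paper's proof: push the $\mathscr{C}^{\alpha+\beta}$ norm inside the time integral, apply Lemma \ref{lemma}(1) to $P_{r-q}$, and integrate $\max\left(s^{-\beta/4},1\right)$ over $\left[0,r\right]$, using $\beta<4$ for integrability at the origin. You are in fact slightly more careful than the paper, which invokes Lemma \ref{lemma}(1) without comment even though that lemma is stated only for gain $\beta\in\mathbb{R}_+$; your reduction of the case $\beta<0$ to the trivial embedding $\left\Vert\cdot\right\Vert_{\alpha+\beta}\leq c\left\Vert\cdot\right\Vert_{\alpha}$ closes that small loose end.

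For part 2 the overall reduction is the same as the paper's (uniform bound from Lemma \ref{lemma}(1) with $\beta=0$, then the semigroup identity to reduce continuity at an arbitrary time to continuity at $0$), but you diverge at the final step. The paper concludes by citing Lemma \ref{lemma}(2) to get $\lim_{r\to0}\left\Vert P_rf-f\right\Vert_{\gamma}=0$; as you correctly observe, that lemma is stated for multipliers $h\in C_c^{\infty}\left(\mathbb{R}^d,\mathbb{R}\right)$, and $y\longmapsto\mathrm{e}^{-\left|2\pi y\right|^4}$ is not compactly supported, so the citation is not literally licensed. Your replacement --- splitting into a high-frequency tail killed by the defining property $\lim_{q\to+\infty}2^{\gamma q}\left\Vert\Delta_qf\right\Vert_{L^{\infty}\left(\mathbb{T}^d\right)}=0$ of $\mathscr{C}^{\gamma}$ (together with a uniform bound on $\left\Vert\eta_r\right\Vert_{L^1\left(\mathbb{T}^d\right)}$, which the paper also uses in Lemma \ref{lemma}(1)), plus finitely many low-frequency blocks, each a finite trigonometric polynomial whose coefficients carry the factor $\mathrm{e}^{-r\left|2\pi m\right|^4}-1\to0$ --- is a correct and self-contained substitute. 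It is essentially the same mechanism as the paper's own proof of Lemma \ref{lemma}(2) (tail decay plus blockwise convergence), transplanted to the non-compactly-supported multiplier, and it repairs a genuine imprecision in the paper's argument rather than merely restating it.
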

\begin{proof}
\begin{enumerate}
\item Applying Lemma \ref{lemma} we deduce that for every $\beta \in ]-\infty,4[,$ there exists $c \in \mathbb{R}_+^*$ such that for all $(\alpha,U)\in\mathbb{R}\times \mathbb{R}_+,$
\begin{align*}
\forall g \in C_U\mathscr{C}^\alpha,\sup_{r \in [0,U]}\Vert \int_0^r\!P_{r-q}(g(q))\,\mathrm{d}q\Vert_{\alpha+\beta}&\leq\sup_{r \in [0,U]}\int_{0}^r\!\Vert P_{r-q}(g(q))\Vert_{\alpha+\beta}\,\mathrm{d}q\\
&\leq c \sup_{r \in [0,U]}\int_{0}^r\max(|r-q|^{\frac{\beta}{4}},1)\Vert g\Vert_{C_U\mathscr{C}^\alpha}\\
&\leq c\max(U,U^{1-\frac{\beta}{4}})\Vert g\Vert_{C_U\mathscr{C}^\alpha}.
\end{align*}
\item It follows from Lemma \ref{lemma} that there exists $c \in \mathbb{R}_+^*$ such that $$\forall (r,g) \in \mathbb{R}_+\times \mathscr{C}^\gamma,\Vert P_rg\Vert_{\gamma}\leq c\Vert g\Vert_{\gamma}.$$
Let $U \in \mathbb{R}_+,r \in [0,U],$ and $(q_n)_{n \in \mathbb{N}}$ be a sequence in $[0,U]$ such that $\lim_{j\to+\infty}q_j=r.$

We have $$\forall n \in \mathbb{N},\Vert P_{q_n}f-P_rf\Vert_{\gamma}\leq c\Vert P_{|q_n-r|}f-f\Vert_{\gamma}.$$
Taking $n\to+\infty$ and applying Lemma \ref{lemma}, we conclude that $h$ is continuous on $[0,U].$
\end{enumerate}
\end{proof}
\section{Stochastic convolution and its Wick powers}
\label{section3}
In this section we provide the necessary probabilistic tools to solve the stochastic Cahn-Hilliard equation.

For the rest of this article we fix a complete probability space $(\Omega,\mathcal{D},\mathds{P})$ and we denote by $L^2(\Omega)$ the set of random variables $g:\Omega\longrightarrow\mathbb{R}$ such that $\mathds{E}[|g|^2]<+\infty.$

We start by recalling the definition of a space-time white noise.
\begin{definition}
A space-time white noise is a family $(\xi_\phi)_{\phi \in L^2(\mathbb{R}_+\times \mathbb{T}^d)}$ of real random variables such that for every $h \in L^2(\mathbb{R}_+\times \mathbb{T}^d),\xi_h$ is normally distributed, $\mathds{E}[\xi_h]=0,$ and $\mathds{E}[\xi_h^2]=\int_{\mathbb{R}_+\times \mathbb{T}^d}\!|h(x)|^2\,\mathrm{d}x.$
\end{definition}
For all $r \in \mathbb{R}_+,$ let $$\mathcal{A}_r:=\{\phi \in L^2(\mathbb{R}_+\times \mathbb{T}^d),\forall (q,x) \in ]r,+\infty[\times \mathbb{T}^d,\phi(q,x)=0\}$$ and $$\mathcal{D}_r:=\bigcap_{q \in ]r,+\infty[}\sigma(\{M \in \mathcal{D},\mathds{P}(M)=0\}\cup \sigma(\bigcup_{\phi \in \mathcal{A}_q}\sigma(\xi_\phi))).$$
Unless otherwise stated, all stochastic processes throughout are defined on the filtered probability space $(\Omega,\mathcal{D},(\mathcal{D}_r)_{r \in \mathbb{R}_+},\mathds{P}).$

The following estimate is needed to control high moments of Wiener-It\^o integrals.
\begin{theorem}[Nelson estimate]
\label{nelson}
For all $q \in \mathbb{N}^*,$ let
\begin{align*}
H_q:L^2((\mathbb{R}_+\times\mathbb{T}^d)^q)&\longrightarrow L^2(\Omega)\\
\phi&\longmapsto H_q(\phi)=\int_{(\mathbb{R}_+\times \mathbb{T}^d)^q}\!\phi(x_1,...,x_q)\,\xi(\mathrm{d}x_1)\,...\,\xi(\mathrm{d}x_q)
\end{align*}
For every $(q,r) \in \mathbb{N}^*\times \mathbb{R}_+^*,$ there exists $c \in \mathbb{R}_+^*$ such that $$\forall \phi \in L^2((\mathbb{R}_+\times \mathbb{T}^d)^q),\mathds{E}[|H_q(\phi)|^r]^{\frac{2}{r}}\leq c\mathds{E}[|H_q(\phi)|^2].$$
\end{theorem}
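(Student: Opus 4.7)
My plan is to exploit the fact that $M_q(\phi)$ lies in the $q$-th homogeneous Wiener chaos, and combine this with the hypercontractivity of the Ornstein--Uhlenbeck semigroup acting on Gaussian $L^2$ space. The case $r \in \left]0,2\right]$ is handled immediately by Jensen's inequality (or Hölder, since $\mathds{P}$ is a probability measure), so the real content is the case $r > 2$.

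The first step is to set up the Ornstein--Uhlenbeck semigroup $\left(T_t\right)_{t \in \mathbb{R}_+}$ associated with the Gaussian family $\left(\xi_\phi\right)_{\phi \in L^2\left(\mathbb{R}_+\times\mathbb{T}^d\right)}$, and recall its two key properties. First, the chaos decomposition diagonalises the semigroup: every $F \in L^2\left(\Omega,\sigma\left(\xi\right),\mathds{P}\right)$ admits a unique representation $F=\sum_{k \in \mathbb{N}}M_k\left(\phi_k\right)$ with $\phi_k$ symmetric in $L^2\left(\left(\mathbb{R}_+\times\mathbb{T}^d\right)^k\right)$, and $T_tF=\sum_{k \in \mathbb{N}}\mathrm{e}^{-kt}M_k\left(\phi_k\right)$; in particular, on the $q$-th chaos $T_t$ acts as multiplication by $\mathrm{e}^{-qt}$. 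Second, Nelson's hypercontractive estimate states that for every $t \in \mathbb{R}_+^*$ and every $p \in \left[2,+\infty\right[$ with $\mathrm{e}^{2t}\geq p-1$, one has $\left\Vert T_tF\right\Vert_{L^p\left(\Omega\right)}\leq \left\Vert F\right\Vert_{L^2\left(\Omega\right)}$.

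With these tools in hand the argument is short: given $r>2$, choose $t \in \mathbb{R}_+^*$ such that $\mathrm{e}^{2t}=r-1$. Apply hypercontractivity to $F=M_q\left(\phi\right)$, and use that on the $q$-th chaos $M_q\left(\phi\right)=\mathrm{e}^{qt}T_tM_q\left(\phi\right)$, to obtain
\begin{align*}
\mathds{E}\left[\left|M_q\left(\phi\right)\right|^r\right]^{\frac{1}{r}} &= \mathrm{e}^{qt}\left\Vert T_tM_q\left(\phi\right)\right\Vert_{L^r\left(\Omega\right)} \\
&\leq \mathrm{e}^{qt}\left\Vert M_q\left(\phi\right)\right\Vert_{L^2\left(\Omega\right)} = \left(r-1\right)^{\frac{q}{2}}\mathds{E}\left[\left|M_q\left(\phi\right)\right|^2\right]^{\frac{1}{2}}.
\end{align*}
Raising to the power $2$ gives the claim with $c=\left(r-1\right)^{q}$.

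The main obstacle is that none of the three ingredients above (the chaos decomposition, the explicit action of $T_t$ on each chaos, and the hypercontractive estimate itself) have been constructed earlier in the paper. The chaos decomposition and the action of $T_t$ can be built from iterated Wiener--Itô integrals and Hermite polynomials in a standard way. For the hypercontractive inequality itself, the cleanest route is Nelson's original strategy: verify the two-point inequality for the Bernoulli Ornstein--Uhlenbeck semigroup by explicit computation, tensorise to obtain it on a finite-dimensional Gauss space via the central limit theorem, and pass to the infinite-dimensional Gaussian family $\left(\xi_\phi\right)$ by approximating $M_q\left(\phi\right)$ through finite linear combinations of $\xi_{\phi_1},\ldots,\xi_{\phi_N}$. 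This approximation step, together with checking that the constant does not depend on the underlying finite-dimensional subspace, is the technically delicate part of the argument.
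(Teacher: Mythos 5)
Your proof is correct: the reduction of $r\leq 2$ to Jensen and the hypercontractivity argument $\left\Vert M_q\left(\phi\right)\right\Vert_{L^r\left(\Omega\right)}=\mathrm{e}^{qt}\left\Vert T_tM_q\left(\phi\right)\right\Vert_{L^r\left(\Omega\right)}\leq\left(r-1\right)^{\frac{q}{2}}\left\Vert M_q\left(\phi\right)\right\Vert_{L^2\left(\Omega\right)}$ for $r>2$ yield exactly the claimed equivalence of moments on the $q$-th Wiener chaos, with $c$ depending only on $q$ and $r$. The paper gives no proof of its own and simply refers to Nualart's book, where the argument is precisely the one you describe, so your approach coincides with the one the paper delegates to.
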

We refer to \cite{MR2200233} for a proof of Theorem \ref{nelson}.

Let $\zeta \in C_c^\infty(\mathbb{R}^d,\mathbb{R})$ such that $\zeta(0)=1$ and $$\forall x \in \mathbb{R}^d,\zeta(-x)=\zeta(x).$$
For all $(r,\epsilon) \in \mathbb{R}_+\times \mathbb{R}_+^*,$ let
\begin{align*}
X_r:L^2(\mathbb{T}^d)&\longrightarrow L^2(\Omega)\\
\phi&\longmapsto X_r(\phi)=\int_{[0,r]\times \mathbb{T}^d}\!(\eta_{r-q}*\phi)(x)\,\xi(\mathrm{d}q,\mathrm{d}x)
\end{align*}
and 
\begin{align*}
\zeta_\epsilon:\mathbb{T}^d&\longrightarrow\mathbb{R}\\
x&\longmapsto\zeta_\epsilon(x)=\sum_{q \in \mathbb{Z}^d}\zeta(\epsilon q)\cos(2\pi\langle x,q\rangle)
\end{align*}
The following version of Kolmogorov continuity theorem is needed for the construction of the stochastic convolution $(X_r)_{r \in \mathbb{R}_+}$ and its Wick powers in Besov spaces.
\begin{theorem}
\label{kolmogorov}
Let $(f_r)_{r \in \mathbb{R}_+}$ be a stochastic process of state space $(\mathscr{S}',\mathcal{B}(\mathscr{S}'))$ and $(\alpha,\beta) \in \mathbb{R}^2$ such that $\beta<\alpha.$ If there exists $(c,\gamma,q) \in (\mathbb{R}_+^*)^2\times ]\frac{\alpha-\beta}{2d},+\infty[$ such that for every $(r_1,r_2,j,x) \in (\mathbb{R}_+)^2\times \mathbb{N}\times \mathbb{T}^d,$ $$\mathds{E}[|\Delta_{j-1}f_0(x)|^q]^{\frac{1}{q}}+\mathds{E}[|\Delta_{j-1}f_{r_2}(x)-\Delta_{j-1}f_{r_1}(x)|^q]^{\frac{1}{q}}\leq c2^{-\alpha(j-1)}|r_2-r_1|^{\gamma+\frac{1}{q}},$$ then there exists a stochastic process $(g_r)_{r \in \mathbb{R}_+}$ of state space $(\mathscr{C}^\beta,\mathcal{B}(\mathscr{C}^\beta))$ such that the sample paths of $(g_q)_{q \in \mathbb{R}_+}$ are continuous and for all $r \in \mathbb{R}_+,\mathds{P}(g_r=f_r)=1.$
\end{theorem}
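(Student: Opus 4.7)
The plan is to adapt the classical Kolmogorov-Chentsov continuity criterion to the Besov setting: control each Littlewood-Paley block $\Delta_{j-1} f_r$ uniformly in $r$ and in $x$, and then reassemble the blocks with their $\mathscr{C}^\beta$ weights to produce a continuous modification.

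First I would use Bernstein's inequalities (Theorem \ref{Bernstein}) to upgrade the pointwise $L^q(\Omega)$ bound to a uniform bound on $\mathbb{T}^d$. Since $\Delta_{j-1} f_r$ has Fourier support in a ball of radius $\lesssim 2^j$, raising the hypothesis to the power $q$, integrating over $\mathbb{T}^d$, applying Fubini, and then invoking Bernstein yields
$$\mathds{E}\bigl[\Vert \Delta_{j-1} f_0\Vert_{L^\infty(\mathbb{T}^d)}^q\bigr]^{1/q} + \mathds{E}\bigl[\Vert \Delta_{j-1}(f_{r_2}-f_{r_1})\Vert_{L^\infty(\mathbb{T}^d)}^q\bigr]^{1/q} \lesssim 2^{jd/q}\, 2^{-\alpha(j-1)}\, |r_2-r_1|^{\gamma+1/q}.$$

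Next I would fix $U \in \mathbb{R}_+$ and discretize $[0,U]$ along the dyadic grid $\{k U 2^{-n} : 0\leq k\leq 2^n\}$. Applying Markov's inequality to the above bound and summing over consecutive dyadic increments yields, for each fixed $j$, a controlled Hölder modulus of continuity for the $L^\infty$-valued process $r \mapsto \Delta_{j-1} f_r$ on the dyadic set; by a standard chaining argument this modification extends continuously to $[0,U]$. Summing these bounds over $j$ with weight $2^{\beta(j-1)}$ and exploiting the quantitative gap provided by the hypothesis on $q$, one shows that the truncated sums $S_N f_r := \sum_{m=0}^{N} \Delta_{m-1} f_r$ form a $\mathds{P}$-almost-sure Cauchy sequence in $C_U \mathscr{C}^\beta$. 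Its limit $g_r$ is a $\mathscr{C}^\beta$-valued process with continuous sample paths on $[0,U]$; one then glues the constructions obtained on $[0,N]$ for $N \in \mathbb{N}$ into a single process on $\mathbb{R}_+$.

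To verify that $g_r$ is a modification of $f_r$, I would note that by construction each Littlewood-Paley block of $g_r$ coincides $\mathds{P}$-a.s. with $\Delta_{j-1} f_r$; combining countably many such a.s. equalities and using that a tempered distribution is determined by its Littlewood-Paley decomposition gives $\mathds{P}(g_r = f_r) = 1$ for every $r \in \mathbb{R}_+$. The main obstacle will be the chaining argument that must exchange $\sup_{r_1,r_2}$ with $\sup_j$: one has to choose the dyadic discretization of time so that, after summation over both the time gap and the frequency parameter $j$, the resulting series remains summable. This balance is precisely what the hypothesis on $q$ enforces and what forces the strict inequality $\beta < \alpha$. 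A secondary subtlety is the Borel measurability of the limit into $(\mathscr{C}^\beta, \mathcal{B}(\mathscr{C}^\beta))$, which follows because each truncation $S_N f_r$ is measurable into $\mathscr{C}^\beta$ and an almost-sure uniform limit of measurable maps is again measurable.
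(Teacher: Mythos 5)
Your proposal is correct and rests on the same two pillars as the paper's argument --- Fubini applied to the $q$-th moment of the blocks, and a Bernstein-type gain to pass from pointwise to uniform control --- but you assemble them in the opposite order. The paper first sums over frequencies inside a single expectation: with $m=\frac{\alpha+\beta}{2}$, the Besov embeddings $\left\Vert\cdot\right\Vert_{\beta}\lesssim\left\Vert\cdot\right\Vert_{m-\frac{d}{q}}\lesssim\left\Vert\cdot\right\Vert_{B^m_{q,q}}$ convert the hypothesis directly into $\mathds{E}\left[\left\Vert f_{r_2}-f_{r_1}\right\Vert_{\beta}^q\right]\lesssim\left|r_2-r_1\right|^{1+\gamma q}$, and the classical Kolmogorov--Chentsov criterion for the $\mathscr{C}^\beta$-valued process $r\longmapsto f_r$ is then invoked in one stroke. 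You instead run the chaining blockwise in $L^\infty\left(\mathbb{T}^d\right)$ and only afterwards sum the weighted blocks, which is why you must justify exchanging $\sup_r$ with the sum over $j$; the paper's ordering avoids that step entirely because the frequency sum is performed before any supremum over time is taken. Your route is more self-contained (no vector-valued Kolmogorov theorem as a black box), and your a.s.-Cauchy argument for the partial sums $S_Nf$ does close, since $\sum_{m>N}2^{\beta\left(m-1\right)}\mathds{E}\left[\sup_{r}\left\Vert\Delta_{m-1}f_r\right\Vert_{L^\infty\left(\mathbb{T}^d\right)}\right]$ is a summable tail. One point you should make explicit: after the $L^q\to L^\infty$ Bernstein (or Besov-embedding) loss of $2^{\frac{jd}{q}}$, summability of $2^{\left(j-1\right)\left(\beta-\alpha+\frac{d}{q}\right)}$ requires $q>\frac{d}{\alpha-\beta}$ (the paper's route needs $q\geq\frac{2d}{\alpha-\beta}$), so the threshold $\frac{\alpha-\beta}{2d}$ in the statement must be read as $\frac{2d}{\alpha-\beta}$; your appeal to ``the quantitative gap provided by the hypothesis on $q$'' is valid only under that reading.
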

\begin{proof}
Let $m:=\frac{\alpha+\beta}{2}.$

There exists $(c_1,c_2,c_3,c_4,\gamma,q)\in (\mathbb{R}_+^*)^5\times ]\frac{\alpha-\beta}{2d},+\infty[$ such that for every $(r_1,r_2) \in (\mathbb{R}_+)^2,$
\begin{align*}
\mathds{E}[\Vert f_{r_2}-f_{r_1}\Vert_{\beta}]&\leq c_1\mathds{E}[\Vert f_{r_2}-f_{r_1}\Vert_{m-\frac{d}{q}}]\\
&\leq c_2\mathds{E}[\Vert f_{r_2}-f_{r_1}\Vert_{B^{m}_{q,q}}]\\
&\leq c_2\sum_{j \in \mathbb{N}}2^{qm(j-1)}\int_{\mathbb{T}^d}\!\mathds{E}[|\Delta_{j-1}f_{r_2}(x)-\Delta_{j-1}f_{r_1}(x)|^q]\,\mathrm{d}x\\
&\leq c_3|r_2-r_1|^{\gamma q+1}\sum_{j \in \mathbb{N}}2^{q(j-1)(m-\alpha)}\\
&\leq c_4|r_2-r_1|^{\gamma q+1},
\end{align*}
completing the proof.
\end{proof}
For all $r \in \mathbb{R}_+,$ let
\begin{align*}
Y_r:L^2(\mathbb{T}^d)&\longrightarrow L^2(\Omega)\\
\phi&\longmapsto Y_r(\phi)=\int_{([0,r]\times \mathbb{T}^d)^2}\!(\int_{\mathbb{T}^d}\!\phi(x)\prod_{m=1}^2\eta_{r-q_m}(x-y_m)\,\mathrm{d}x)\,\xi(\mathrm{d}q_1,\mathrm{d}y_1)\,\xi(\mathrm{d}q_2,\mathrm{d}y_2)
\end{align*}
and 
\begin{align*}
G_r:L^2(\mathbb{T}^d)&\longrightarrow L^2(\Omega)\\
\phi&\longmapsto G_r(\phi)=\int_{([0,r]\times \mathbb{T}^d)^3}\!(\int_{\mathbb{T}^d}\!\phi(x)\prod_{m=1}^3\eta_{r-q_m}(x-y_m)\,\mathrm{d}x)\,\xi(\mathrm{d}q_1,\mathrm{d}y_1)\,\xi(\mathrm{d}q_2,\mathrm{d}y_2)\,\xi(\mathrm{d}q_3,\mathrm{d}y_3)
\end{align*}
We provide next the optimal regularity properties of $(X_r)_{r \in \mathbb{R}_+}$ and its Wick powers.
\begin{theorem}
\label{theorem1}
Let $\theta \in ]-\infty,2-\frac{d}{2}[.$
\begin{enumerate}
\item There exists a stochastic process $(\widetilde{X}_r)_{r \in \mathbb{R}_+}$ of state space $(\mathscr{C}^\theta,\mathcal{B}(\mathscr{C}^\theta))$ such that the sample paths of $(\widetilde{X}_q)_{q \in \mathbb{R}_+}$ are continuous and for all $(r,\phi) \in \mathbb{R}_+\times \mathscr{S},\mathds{P}(\widetilde{X}_r(\phi)=X_r(\phi))=1.$
\item For all $(r,q) \in \mathbb{R}_+\times \mathbb{R}_+^*,$ let $\widetilde{X}_{r,q}:=\zeta_q*\widetilde{X}_r.$ For every $(U,\sigma) \in \mathbb{R}_+\times \mathbb{R}_+^*,$ $$\lim_{\epsilon\to 0}\mathds{P}(\sup_{r \in [0,U]}\Vert \widetilde{X}_{r,\epsilon}-\widetilde{X}_r\Vert_{\theta}>\sigma)=0.$$
\end{enumerate}
\end{theorem}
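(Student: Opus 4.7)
The plan is to reduce both parts of the theorem to the Kolmogorov criterion (Theorem \ref{kolmogorov}) by computing second moments of the Littlewood-Paley blocks of $X_r$ in Fourier space and then upgrading them to $L^q$ bounds via Gaussian hypercontractivity (a special case of Theorem \ref{nelson}, since $X_r(\phi)$ is a first-order Wiener-It\^o integral).

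First, for part 1, I would start by computing the Fourier coefficients
\[
\widehat{X}_r(m)=\int_0^r\mathrm{e}^{-(r-s)|2\pi m|^4}\,\mathrm{d}W_m(s),
\]
where $W_m$ is the complex Brownian motion associated with the Fourier mode $m$. The orthogonality of distinct modes gives $\mathbb{E}\bigl[\bigl|\widehat{X}_r(m)\bigr|^2\bigr]\leq|2\pi m|^{-4}$. For the time increment, splitting $X_{r_2}-X_{r_1}$ into the Duhamel pieces $\int_{r_1}^{r_2}\eta_{r_2-s}\,\mathrm{d}\xi$ and $(P_{r_2-r_1}-\mathrm{Id})X_{r_1}$, then using the elementary estimate $1-\mathrm{e}^{-x}\leq x^{\alpha}$ for $\alpha\in[0,1]$, yields the key interpolated bound
\[
\mathbb{E}\bigl[\bigl|\widehat{X}_{r_2}(m)-\widehat{X}_{r_1}(m)\bigr|^2\bigr]\lesssim|r_2-r_1|^{2\gamma}|m|^{8\gamma-4},\qquad \gamma\in[0,1/2].
\]
Summing against $\rho_{j-1}(m)^2$ (noting there are at most $O(2^{jd})$ modes in the dyadic shell) and applying hypercontractivity gives
\[
\mathbb{E}\bigl[\bigl|\Delta_{j-1}(X_{r_2}-X_{r_1})(x)\bigr|^q\bigr]^{1/q}\lesssim|r_2-r_1|^{\gamma}\,2^{(j-1)(d/2-2+4\gamma)}.
\]

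Next, given $\theta<2-d/2$, I would choose $\gamma>0$ and $q\in\mathbb{N}^*$ with $\gamma-\frac{1}{q}>0$, $2-d/2-4\gamma>\theta$, and $q>\frac{2-d/2-4\gamma-\theta}{2d}$, which is possible because $2-d/2-\theta>0$. Then Theorem \ref{kolmogorov} with $\alpha=2-d/2-4\gamma$ and $\beta=\theta$ produces the continuous modification $\widetilde{X}_r$ in $\mathscr{C}^{\theta}$.

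For part 2, I would repeat the above computation with the extra Fourier multiplier $\zeta(\epsilon m)-1$. Since $\zeta$ is smooth with $\zeta(0)=1$, interpolating between $|\zeta(\epsilon m)-1|\lesssim\epsilon|m|$ and $|\zeta(\epsilon m)-1|\lesssim 1$ gives $|\zeta(\epsilon m)-1|\lesssim(\epsilon|m|)^{\kappa}$ for any $\kappa\in[0,1]$. This produces
\[
\mathbb{E}\bigl[\bigl|\Delta_{j-1}\bigl[(\widetilde{X}_{r_2,\epsilon}-\widetilde{X}_{r_2})-(\widetilde{X}_{r_1,\epsilon}-\widetilde{X}_{r_1})\bigr](x)\bigr|^q\bigr]^{1/q}\lesssim\epsilon^{\kappa}|r_2-r_1|^{\gamma}\,2^{(j-1)(d/2-2+4\gamma+\kappa)}.
\]
Choosing $\kappa,\gamma$ small enough that $\kappa+4\gamma<2-d/2-\theta$, the Kolmogorov-Chentsov chaining argument underlying Theorem \ref{kolmogorov} then produces a bound of the form $\mathbb{E}\bigl[\sup_{r\in[0,U]}\|\widetilde{X}_{r,\epsilon}-\widetilde{X}_r\|_{\theta}^{q}\bigr]\lesssim\epsilon^{\kappa q}$, and the desired convergence in probability follows from Markov's inequality.

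The main obstacle is the three-parameter balancing in the second step: the exponent $\gamma$ must be taken strictly between $1/q$ (to recover time continuity via Kolmogorov) and $(2-d/2-\theta)/4$ (to keep $\alpha>\theta$), while simultaneously allowing $\kappa>0$ in part 2. All constraints can be met only because the assumption $\theta<2-d/2$ leaves a positive budget $2-d/2-\theta$ to distribute among $4\gamma$, $\kappa$, and $4/q$. Verifying this budget is non-empty, and in particular tracking it through the dyadic summations, is the delicate part of the argument.
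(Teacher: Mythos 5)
Your proposal follows essentially the same route as the paper: represent the Littlewood--Paley blocks of $X_r$ in Fourier space, compute second moments via the It\^o isometry with the interpolation bound $1-\mathrm{e}^{-x}\leq x^{\gamma}$ trading time regularity against powers of $\left|m\right|$, upgrade to $q$-th moments by Gaussian hypercontractivity (Theorem \ref{nelson}), and feed the resulting estimate $\lesssim\left|r_2-r_1\right|^{\gamma}2^{\left(j-1\right)\left(\frac{d}{2}-2+4\gamma\right)}$ into Theorem \ref{kolmogorov}; for part 2 you insert the multiplier $\zeta\left(\epsilon m\right)-1$ and extract a positive power of $\epsilon$ exactly as the paper does. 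The only differences are cosmetic (your $\gamma$ is half of the paper's, and you track the parameter budget among $\gamma$, $\kappa$, $q$ more explicitly than the paper, which is a point in your favour rather than a gap).
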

\begin{proof}
\begin{enumerate}
\item We have $$\forall r \in \mathbb{R}_+,\sum_{m \in \mathbb{Z}^d}\frac{1}{(1+|m|)^{d+1}}\mathds{E}[|X_r(\cos(2\pi\langle\mathrm{Id}_{\mathbb{T}^d},m\rangle))|+|X_r(\sin(2\pi\langle\mathrm{Id}_{\mathbb{T}^d},m\rangle))|]<+\infty.$$

Therefore there exists a stochastic process $(\wideparen{X}_q)_{q \in \mathbb{R}_+}$ of state space $(\mathscr{S}',\mathcal{B}(\mathscr{S}'))$ such that for every $(r,\phi) \in \mathbb{R}_+\times \mathscr{S},\mathds{P}(\wideparen{X}_r(\phi)=X_r(\phi))=1.$

We fix $(\beta,\gamma) \in \mathbb{R}_+^*\times ]0,1[.$

It follows from theorem \ref{nelson} that there exists $(c_1,c_2,c_3) \in (\mathbb{R}_+^*)^3$ such that for all $(r_1,r_2,j,x) \in(\mathbb{R}_+)^2\times \mathbb{N}\times \mathbb{T}^d,$ if $r_1\leq r_2,$ then
\begin{align*}
\mathds{E}[|\Delta_{j-1}\wideparen{X}_{r_2}(x)-\Delta_{j-1}\wideparen{X}_{r_1}(x)|^\beta]^{\frac{2}{\beta}}&\leq c_1\mathds{E}[|\Delta_{j-1}\wideparen{X}_{r_2}(x)-\Delta_{j-1}\wideparen{X}_{r_1}(x)|^2]\\
&\leq c_1\int_{[0,r_1]\times\mathbb{T}^d}\!|\int_{\mathbb{T}^d}\!K_{j-1}(x-y)(\eta_{r_2-\alpha}(y-v)-\eta_{r_1-\alpha}(y-v))\,\mathrm{d}y|^2\\
&\phantom{\leq}\,\mathrm{d}\alpha\,\mathrm{d}v\\
& \phantom{\leq}\ +c_1\int_{[r_1,r_2]\times \mathbb{T}^d}\!|\int_{\mathbb{T}^d}\!K_{j-1}(x-y)\eta_{r_2-\alpha}(y-v)\,\mathrm{d}y|^2\,\mathrm{d}\alpha\,\mathrm{d}v\\
&\leq c_1\int_{[r_1,r_2]\times \mathbb{T}^d}\!|K_{j-1}*\eta_{r_2-\alpha}(y)-K_{j-1}*\eta_{r_1-\alpha}(y)|^2\,\mathrm{d}\alpha\,\mathrm{d}y\\
& \phantom{\leq}\ +c_1\int_{[r_1,r_2]\times \mathbb{T}^d}\!|K_{j-1}*\eta_{r_2-\alpha}(y)|^2\,\mathrm{d}\alpha\,\mathrm{d}y\\
&\leq c_1\sum_{m \in \mathbb{Z}^d}\int_{0}^{r_1}\!|\rho_{j-1}(m)|^2(\mathrm{e}^{-|2\pi m|^4(r_2-\alpha)}-\mathrm{e}^{-|2\pi m|^4(r_1-\alpha)})^2\,\mathrm{d}\alpha\\
& \phantom{\leq}\ +c_1\sum_{m \in \mathbb{Z}^d}\int_{0}^{r_2-r_1}\!|\rho_{j-1}(m)|^2\mathrm{e}^{-2\alpha|2\pi m|^4}\,\mathrm{d}\alpha\\
&\leq c_2|r_2-r_1|^{\gamma}\sum_{m \in \mathbb{Z}^d}|\rho_{j-1}(m)|^2(1+|m|^4)^{\gamma-1}\\
&\leq c_32^{(j-1)(d+4\gamma-4)}|r_2-r_1|^\gamma.
\end{align*}
Applying Theorem \ref{kolmogorov}, we conclude the proof.
\item Let $(\beta,\gamma) \in \mathbb{R}_+^*\times ]0,1[.$

It follows from theorem \ref{nelson} that there exists $(c_1,c_2,c_3) \in (\mathbb{R}_+^*)^3$ such that for all $(r_1,r_2,\epsilon,j,x) \in(\mathbb{R}_+)^2\times \mathbb{R}_+^*\times \mathbb{N}\times \mathbb{T}^d,$ if $r_1\leq r_2,$ then
\begin{align*}
\mathds{E}[|\Delta_{j-1}\widetilde{X}_{r_2,\epsilon}(x)-\Delta_{j-1}\widetilde{X}_{r_1}(x)\\
-\Delta_{j-1}\widetilde{X}_{r_1,\epsilon}(x)+\Delta_{j-1}\widetilde{X}_{r_1}(x)|^{\beta}]^{\frac{2}{\beta}}&\leq c_1\mathds{E}[|\Delta_{j-1}\widetilde{X}_{r_2,\epsilon}(x)-\Delta_{j-1}\widetilde{X}_{r_2}(x)\\
&\phantom{\leq}-\Delta_{j-1}\widetilde{X}_{r_1,\epsilon}(x)+\Delta_{j-1}\widetilde{X}_{r_1}(x)|^2]\\
&\leq c_1\sum_{m \in \mathbb{Z}^d}|\rho_{j-1}(m)|^2(\zeta(\epsilon m)-1)^2\\
&\phantom{\leq}\int_{0}^{r_1}\!(\mathrm{e}^{-|2\pi m|^4(r_2-\alpha)}-\mathrm{e}^{-|2\pi m|^4(r_1-\alpha)})^2\,\mathrm{d}\alpha\\
& \phantom{\leq}\ +c_1\sum_{m \in \mathbb{Z}^d}|\rho_{j-1}(m)|^2(\zeta(\epsilon m)-1)^2\int_{0}^{r_2-r_1}\!\mathrm{e}^{-2\alpha|2\pi m|^4}\,\mathrm{d}\alpha\\
&\leq c_2\epsilon^\gamma|r_2-r_1|^{\frac{\gamma}{2}}\sum_{m \in \mathbb{Z}^d}|\rho_{j-1}(m)|^2(1+|m|^{4})^{\gamma-1}\\
&\leq c_3\epsilon^\gamma2^{(j-1)(d+4\gamma-4)}|r_2-r_1|^{\frac{\gamma}{2}}.
\end{align*}
We deduce from Theorem \ref{kolmogorov} that $\lim_{\epsilon\to0}\mathds{E}[\sup_{r\in[0,U]}\Vert\widetilde{X}_{r,\epsilon}-\widetilde{X}_r\Vert_{\theta}]=0.$
\end{enumerate}
\end{proof}
The following lemma gives a bound on discrete convolutions.
\begin{lemma}
\label{lemma2}
Let $(\alpha,\beta) \in \mathbb{R}^2.$ If $\max(\alpha,\beta)<d$ and $\alpha+\beta>d,$ then $$\exists c \in \mathbb{R}_+^*,\forall q\in \mathbb{Z}^d,\sum_{m \in \mathbb{Z}^d}(1+|q-m|^4)^{-\frac{\alpha}{4}}(1+|m|^{4})^{-\frac{\beta}{4}}\leq c(1+|q|^4)^{\frac{d-\alpha-\beta}{4}}.$$
\end{lemma}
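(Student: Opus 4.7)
The plan is to mimic the classical proof of the fractional convolution identity $\left|\cdot\right|^{-\alpha} * \left|\cdot\right|^{-\beta} \sim \left|\cdot\right|^{d-\alpha-\beta}$ by splitting $\mathbb{Z}^d$ into zones governed by the relative sizes of $\left|m\right|$, $\left|q-m\right|$, and $\left|q\right|$. Writing $\langle x \rangle := \left(1+\left|x\right|^4\right)^{1/4}$ so that $\langle x \rangle \sim \max\left(1,\left|x\right|\right)$, the target becomes the uniform-in-$q$ bound
$$S\left(q\right) := \sum_{m \in \mathbb{Z}^d} \langle q-m \rangle^{-\alpha} \langle m \rangle^{-\beta} \leq c \langle q \rangle^{d-\alpha-\beta}.$$

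First I would observe that the hypotheses $\max\left(\alpha,\beta\right) < d$ and $\alpha+\beta > d$ force $\alpha, \beta \in \left]0,d\right[$: indeed, if $\alpha \leq 0$ then $\alpha+\beta \leq \beta < d$, contradicting $\alpha+\beta > d$. The case $\left|q\right| \leq 1$ is then disposed of directly, using $\langle q-m \rangle \geq \tfrac{1}{2}\langle m \rangle$ for $\left|m\right| \geq 2$ to bound $S\left(q\right)$ by finitely many explicit terms plus $c \sum_{\left|m\right| \geq 2} \langle m \rangle^{-\alpha-\beta}$, which converges thanks to $\alpha+\beta > d$; here $\langle q \rangle^{d-\alpha-\beta}$ is itself bounded below by a positive constant, so the estimate is immediate.

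For $\left|q\right| > 1$ I would partition $\mathbb{Z}^d$ into four zones:
\begin{align*}
A &= \left\{m : \left|m\right| \leq \left|q\right|/2\right\},\\
B &= \left\{m : \left|q-m\right| \leq \left|q\right|/2\right\},\\
C &= \left\{m : \left|m\right| > \left|q\right|/2,\ \left|q-m\right| > \left|q\right|/2,\ \left|m\right| \leq 2\left|q\right|\right\},\\
D &= \left\{m : \left|m\right| > 2\left|q\right|\right\}.
\end{align*}
On $A$, the triangle inequality gives $\langle q-m \rangle \geq c \langle q \rangle$, and a sum-to-integral comparison bounds $\sum_{\left|m\right| \leq \left|q\right|/2} \langle m \rangle^{-\beta}$ by $c \langle q \rangle^{d-\beta}$, using $\beta < d$; the resulting contribution to $S\left(q\right)$ is $c \langle q \rangle^{d-\alpha-\beta}$. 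Zone $B$ is handled symmetrically using $\alpha < d$. On $C$ both factors are comparable to $\langle q \rangle^{-\alpha}$ and $\langle q \rangle^{-\beta}$, and the number of lattice points is $O\left(\langle q \rangle^d\right)$, again yielding $c \langle q \rangle^{d-\alpha-\beta}$. Finally, on $D$ one has $\left|q-m\right| \geq \left|m\right|/2$, so the summand is dominated by $c \langle m \rangle^{-\alpha-\beta}$, and the tail $\sum_{\left|m\right| > 2\left|q\right|} \langle m \rangle^{-\alpha-\beta}$ is bounded by $c \langle q \rangle^{d-\alpha-\beta}$ thanks to $\alpha+\beta > d$.

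The conceptual content is elementary; the main obstacle is the bookkeeping, and in particular making sure each of the three hypotheses is invoked exactly where needed — $\beta < d$ on $A$, $\alpha < d$ on $B$, and $\alpha+\beta > d$ both on the tail zone $D$ and in the $\left|q\right| \leq 1$ case — together with the standard sum-to-integral comparison for the radial lattice sums $\sum_{\left|m\right| \leq N} \langle m \rangle^{-\gamma}$ and $\sum_{\left|m\right| \geq N} \langle m \rangle^{-\gamma}$ and their continuous counterparts.
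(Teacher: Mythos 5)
Your proposal is correct and follows essentially the same route as the paper: the paper's sets $J_1,J_2,J_3,J_4$ (defined for pairs $\left(m_1,m_2\right)$ with $m_1+m_2=q$) are exactly your zones $D,A,B,C$, and each zone is estimated the same way, using $\alpha+\beta>d$ on the tail, $\beta<d$ where $\left|q-m\right|\gtrsim\left|q\right|$, and $\alpha<d$ symmetrically. Your separate treatment of $\left|q\right|\leq 1$ and the observation that the hypotheses force $\alpha,\beta\in\left]0,d\right[$ are harmless refinements of the same argument.
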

\begin{proof}
Let $$J_1:=\{(m_1,m_2) \in (\mathbb{Z}^d)^2,|m_1|\geq 2|m_1+m_2|\},$$ $$J_2:=\{(m_1,m_2) \in (\mathbb{Z}^d)^2,|m_1|\leq \frac{1}{2}|m_1+m_2|\},$$ $$J_3:=\{(m_1,m_2) \in(\mathbb{Z}^d)^2,|m_2|\leq\frac{1}{2}|m_1+m_2|\},$$ and $$J_4:=(\mathbb{Z}^d)^2-(J_1\cup J_2 \cup J_3).$$

There exists $(c_1,c_2) \in (\mathbb{R}_+^*)^2$ such that for all $q \in \mathbb{Z}^d,$
\begin{align*}
\sum_{m\in\mathbb{Z}^d}(1+|q-m|^4)^{-\frac{\alpha}{4}}(1+|m|^4)^{-\frac{\beta}{4}}&\leq\sum_{m\in \mathbb{Z}^d}(1+|q-m|^4)^{-\frac{\alpha}{4}}(1+|m|^4)^{-\frac{\beta}{4}}\mathds{1}_{J_1}(m,q-m)\\
& \phantom{\leq}\ +\sum_{m\in \mathbb{Z}^d}(1+|q-m|^4)^{-\frac{\alpha}{4}}(1+|m|^4)^{-\frac{\beta}{4}}\mathds{1}_{J_2}(m,q-m)\\
& \phantom{\leq}\ +\sum_{m\in \mathbb{Z}^d}(1+|q-m|^4)^{-\frac{\alpha}{4}}(1+|m|^4)^{-\frac{\beta}{4}}\mathds{1}_{J_3}(m,q-m)\\
& \phantom{\leq}\ +\sum_{m\in \mathbb{Z}^d}(1+|q-m|^4)^{-\frac{\alpha}{4}}(1+|m|^4)^{-\frac{\beta}{4}}\mathds{1}_{J_4}(m,q-m)\\
&\leq c_1\sum_{m\in \mathbb{Z}^d}(1+|m|^4)^{-\frac{\alpha+\beta}{4}}\mathds{1}_{J_1}(m,q-m)\\
& \phantom{\leq}\ +c_1(1+|q|^4)^{-\frac{\alpha}{4}}\sum_{m \in \mathbb{Z}^d}(1+|m|^4)^{-\frac{\beta}{4}}\mathds{1}_{J_2}(m,q-m)\\
& \phantom{\leq}\ +c_1(1+|q|^4)^{-\frac{\beta}{4}}\sum_{m \in \mathbb{Z}^d}(1+|q-m|^4)^{-\frac{\alpha}{4}}\mathds{1}_{J_3}(m,q-m)\\
& \phantom{\leq}\ +c_1(1+|q|^4)^{-\frac{\alpha}{4}}\sum_{m \in \mathbb{Z}^d}(1+|m|^4)^{-\frac{\beta}{4}}\mathds{1}_{J_4}(m,q-m)\\
&\leq c_2(1+|q|^4)^{\frac{d-\alpha-\beta}{4}}.
\end{align*}
\end{proof}
For all $(r,\epsilon) \in \mathbb{R}_+\times \mathbb{R}_+^*,$ let
\begin{align*}
Y_{r,\epsilon}:L^2(\mathbb{T}^d)&\longrightarrow L^2(\Omega)\\
\phi&\longmapsto Y_{r,\epsilon}(\phi)=\int_{([0,r]\times \mathbb{T}^d)^2}\!(\int_{\mathbb{T}^d}\!\phi(x)\prod_{m=1}^2(\zeta_\epsilon*\eta_{r-q_m})(x-y_m)\,\mathrm{d}x)\,\xi(\mathrm{dq_1,\mathrm{d}y_1})\,\xi(\mathrm{d}q_2,\mathrm{d}y_2)
\end{align*}
We note that for every $(r,\epsilon,\phi) \in \mathbb{R}_+\times \mathbb{R}_+^*\times \mathscr{S}$ and for $\mathds{P}$-almost every $\omega \in \Omega,$ $$(Y_{r,\epsilon}(\phi))(\omega)=\int_{\mathbb{T}^d}\!\phi(x)(((\widetilde{X}_{r,\epsilon}(\omega))(x))^2-\mathds{E}[|\widetilde{X}_{r,\epsilon}(x)|^2])\,\mathrm{d}x.$$
\begin{theorem}
\label{theorem2}
If $d \in \{4,5,6,7\},\theta \in ]-\infty,4-d[,$ then 
\begin{enumerate}
\item there exist a stochastic process $(\widetilde{Y}_r)_{r \in \mathbb{R}_+}$ of state space $(\mathscr{C}^\theta,\mathcal{B}(\mathscr{C}^\theta))$ such that the sample paths of $(\widetilde{Y}_q)_{q \in \mathbb{R}_+}$ are continuous and for all $(r,\phi) \in \mathbb{R}_+\times \mathscr{S},\mathds{P}(\widetilde{Y}_r(\phi)=Y_r(\phi))=1.$
\item for every $\epsilon \in \mathbb{R}_+^*,$ there exist a stochastic process $(\widetilde{Y}_{r,\epsilon})_{r \in \mathbb{R}_+}$ of state space $(\mathscr{C}^\theta,\mathcal{B}(\mathscr{C}^\theta))$ such that the sample paths of $(\widetilde{Y}_{q,\epsilon})_{q \in \mathbb{R}_+}$ are continuous and for all $(r,\phi) \in \mathbb{R}_+\times \mathscr{S},\mathds{P}(\widetilde{Y}_{r,\epsilon}(\phi)=Y_{r,\epsilon}(\phi))=1.$
\item for every $(U,\sigma) \in \mathbb{R}_+\times \mathbb{R}_+^*,$ $$\lim_{\epsilon\to0}\mathds{P}(\sup_{r \in [0,U]}\Vert\widetilde{Y}_{r,\epsilon}-\widetilde{Y}_r\Vert_{\theta}>\sigma)=0.$$
\end{enumerate}
\end{theorem}
\begin{proof}
\begin{enumerate}
\item We have $$\forall r \in \mathbb{R}_+,\sum_{m \in \mathbb{Z}^d}\frac{1}{(1+|m|)^{d+1}}\mathds{E}[|Y_r(\cos(2\pi\langle\mathrm{Id}_{\mathbb{T}^d},m\rangle))|+|Y_r(\sin(2\pi\langle\mathrm{Id}_{\mathbb{T}^d},m\rangle))|]<+\infty.$$
Therefore there exist stochastic processes $(\wideparen{Y}_q)_{q \in \mathbb{R}_+}$ of state space $(\mathscr{S}',\mathcal{B}(\mathscr{S}'))$ such that for every $(r,\phi) \in \mathbb{R}_+\times \mathscr{S},\mathds{P}(\wideparen{Y}_r(\phi)=Y_r(\phi))=1.$

Let $(\beta,\gamma) \in \mathbb{R}_+^*\times ]0,\frac{d}{8}[.$

It follows from Theorem \ref{nelson} and Lemma \ref{lemma2} that there exists $(c_1,c_2,c_3,c_4) \in (\mathbb{R}_+^*)^{4}$ such that for all $(r_1,r_2,j,x) \in (\mathbb{R}_+)^2\times\mathbb{N}\times \mathbb{T}^d,$ if $r_1\leq r_2,$ then  
\begin{align*}
\mathds{E}[|\Delta_{j-1}\wideparen{Y}_{r_2}(x)-\Delta_{j-1}\wideparen{Y}_{r_1}(x)|^\beta]^{\frac{2}{\beta}}&\leq c_1\mathds{E}[|\Delta_{j-1}\wideparen{Y}_{r_2}(x)-\Delta_{j-1}\wideparen{Y}_{r_1}(x)|^2]\\
&\leq c_1\int_{[0,r_1]^2}\!(\int_{(\mathbb{T}^d)^2}\!|\int_{\mathbb{T}^d}\!K_{j-1}(x-y)(\prod_{q=1}^2\eta_{r_2-\alpha_q}(y-v_q)\\
&\phantom{\leq}-\prod_{m=1}^2\eta_{r_1-\alpha_m}(y-v_m))\,\mathrm{d}y|^2\mathrm{d}v_1\,\mathrm{d}v_2)\,\mathrm{d}\alpha_1\,\mathrm{d}\alpha_2\\
& \phantom{\leq}\ +c_1\int_{[r_1,r_2]^2}\!(\int_{(\mathbb{T}^d)^2}\!|\int_{\mathbb{T}^d}\!K_{j-1}(x-y)\prod_{q=1}^2\eta_{r_2-\alpha_q}(y-v_q)\,\mathrm{d}y|^2\\
&\phantom{\leq}\mathrm{d}v_1\,\mathrm{d}v_2)\,\mathrm{d}\alpha_1\,\mathrm{d}\alpha_2\\
&\leq c_1\int_{[0,r_1]^2}\!(\int_{(\mathbb{T}^d)^4}\!\prod_{m=1}^2K_{j-1}(y_m)(\prod_{q=1}^2\eta_{r_2-\alpha_q}(y_m-v_q)\\
&\phantom{\leq}-\prod_{n=1}^2\eta_{r_1-\alpha_n}(y_m-v_n))\,\mathrm{d}v_1\,\mathrm{d}v_2\,\mathrm{d}y_1\,\mathrm{d}y_2)\,\mathrm{d}\alpha_1\,\mathrm{d}\alpha_2\\
& \phantom{\leq}\ +c_1\int_{[r_1,r_2]^2}\!(\int_{(\mathbb{T}^d)^4}\!\prod_{m=1}^2\prod_{q=1}^2K_{j-1}(y_m)\eta_{r_2-\alpha_q}(y_m-v_q)\\
&\phantom{\leq}\,\mathrm{d}v_1\,\mathrm{d}v_2\,\mathrm{d}y_1\,\mathrm{d}y_2)\,\mathrm{d}\alpha_1\,\mathrm{d}\alpha_2\\
&\leq c_1\int_{[0,r_1]^2}\!(\int_{(\mathbb{T}^d)^2}K_{j-1}(y_1)K_{j-1}(y_2)(\prod_{q=1}^2\eta_{2(r_2-\alpha_q)}(y_1-y_2)\\
&\phantom{\leq}-\prod_{m=1}^2\eta_{r_2+r_1-2\alpha_m}(y_1-y_2))\,\mathrm{d}y_1\,\mathrm{d}y_2)\,\mathrm{d}\alpha_1\,\mathrm{d}\alpha_2\\
& \phantom{\leq}\ +c_1\int_{[0,r_1]^2}\!(\int_{(\mathbb{T}^d)^2}K_{j-1}(y_1)K_{j-1}(y_2)(\prod_{q=1}^2\eta_{2(r_1-\alpha_q)}(y_1-y_2)\\
&\phantom{\leq}-\prod_{m=1}^2\eta_{r_2+r_1-2\alpha_m}(y_1-y_2))\,\mathrm{d}y_1\,\mathrm{d}y_2)\,\mathrm{d}\alpha_1\,\mathrm{d}\alpha_2\\
& \phantom{\leq}\ +c_1\int_{[r_1,r_2]^2}\!(\int_{(\mathbb{T}^d)^2}\!K_{j-1}(y_1)K_{j-1}(y_2)\prod_{q=1}^2\eta_{2(r_2-\alpha_q)}(y_1-y_2)\\
&\phantom{\leq}\,\mathrm{d}y_1\,\mathrm{d}y_2)\,\mathrm{d}\alpha_1\mathrm{d}\alpha_2\\
&\leq c_1\sum_{(m_1,m_2) \in (\mathbb{Z}^d)^2}|\rho_{j-1}(m_1+m_2)|^2\int_{[0,r_1]^2}\!(\prod_{q=1}^2\mathrm{e}^{-|2\pi m_q|^4(r_2-\alpha_q)}\\
&\phantom{\leq}-\prod_{n=1}^2\mathrm{e}^{-|2\pi m_n|^4(r_1-\alpha_n)})^2\,\mathrm{d}\alpha_1\,\mathrm{d}\alpha_2
\end{align*}
\begin{align*}
& +c_1\sum_{(m_1,m_2) \in (\mathbb{Z}^d)^2}|\rho_{j-1}(m_1+m_2)|^2\int_{[0,r_2-r_1]^2}\!\prod_{q=1}^2\mathrm{e}^{-2\alpha_q|2\pi m_q|^4}\,\mathrm{d}\alpha_1\,\mathrm{d}\alpha_2\\
&\leq c_2|r_2-r_1|^{2\gamma}\sum_{(m_1,m_2) \in (\mathbb{Z}^d)^2}|\rho_{j-1}(m_1+m_2)|^2\prod_{q=1}^2\frac{1}{(1+|m_q|^4)^{1-\gamma}}\\
&\leq c_2|r_2-r_1|^{2\gamma}\sum_{m_1 \in \mathbb{Z}^d}|\rho_{j-1}(m_1)|^2\sum_{m_2 \in \mathbb{Z}^d}(1+|m_1-m_2|^4)^{\gamma-1}(1+|m_2|^4)^{\gamma-1}\\
&\leq c_3|r_2-r_1|^{2\gamma}\sum_{m \in \mathbb{Z}^d}|\rho_{j-1}(m)|^2(1+|m|^4)^{\frac{d}{4}+2\gamma-2}\\
&\leq c_42^{(j-1)(2d+8\gamma-8)}|r_2-r_1|^{2\gamma}.
\end{align*}
Applying Theorem \ref{kolmogorov}, we complete the proof.
\item For all $\epsilon \in \mathbb{R}_+^*,$ there exist a stochastic process $(\wideparen{Y}_{q,\epsilon})_{q \in \mathbb{R}_+}$ of state space $(\mathscr{S}',\mathcal{B}(\mathscr{S}'))$ such that for every $(r,\phi) \in \mathbb{R}_+\times \mathscr{S},\mathds{P}(\wideparen{Y}_{r,\epsilon}(\phi)=Y_{r,\epsilon}(\phi))=1.$

We fix $(\beta,\gamma) \in \mathbb{R}_+^*\times ]0,\frac{d}{8}[.$

It follows from Theorem \ref{nelson} and Lemma \ref{lemma2} that there exists $(c_1,c_2,c_3) \in (\mathbb{R}_+^*)^3$ such that for all $(r_1,r_2,\epsilon,j,x) \in (\mathbb{R}_+)^2\times \mathbb{R}_+^*\times \mathbb{N}\times \mathbb{T}^d,$ if $r_1\leq r_2,$ then 
\begin{align*}
\mathds{E}[|\Delta_{j-1}\wideparen{Y}_{r_2,\epsilon}(x)-\Delta_{j-1}\widetilde{Y}_{r_2}(x)\\-\Delta_{j-1}\wideparen{Y}_{r_1,\epsilon}(x)+\Delta_{j-1}\widetilde{Y}_{r_1}(x)|^\beta]^{\frac{2}{\beta}}&\leq c_1\mathds{E}[|\Delta_{j-1}\wideparen{Y}_{r_2,\epsilon}(x)-\Delta_{j-1}\widetilde{Y}_{r_2}(x)-\Delta_{j-1}\wideparen{Y}_{r_1,\epsilon}(x)+\Delta_{j-1}\widetilde{Y}_{r_1}(x)|^2]\\
&\leq c_1\sum_{(m_1,m_2) \in (\mathbb{Z}^d)^2}|\rho_{j-1}(m_1+m_2)|^2\\
&\phantom{\leq}\int_{[0,r_1]^2}\!(\prod_{q=1}^2(\zeta(\epsilon m_q)-1)^2\mathrm{e}^{-|2\pi m_q|^4(r_2-\alpha_q)}\\
&\phantom{\leq}-\prod_{n=1}^2(\zeta(\epsilon m_n)-1)^2\mathrm{e}^{-|2\pi m_n|^4(r_1-\alpha_n)})^2\,\mathrm{d}\alpha_1\,\mathrm{d}\alpha_2\\
& \phantom{\leq}\ +c_1\sum_{(m_1,m_2) \in (\mathbb{Z}^d)^2}|\rho_{j-1}(m_1+m_2)|^2\\
&\phantom{\leq}\int_{[0,r_2-r_1]^2}\!\prod_{q=1}^2(\zeta(\epsilon m_q)-1)^2\mathrm{e}^{-2\alpha_q|2\pi m_q|^4}\,\mathrm{d}\alpha_1\,\mathrm{d}\alpha_2\\
&\leq c_2\epsilon^{2\gamma}|r_2-r_1|^{\gamma}\sum_{(m_1,m_2) \in (\mathbb{Z}^d)^2}|\rho_{j-1}(m_1+m_2)|^2\prod_{q=1}^2\frac{1}{(1+|m_q|^4)^{1-\gamma}}\\
&\leq c_32^{(j-1)(2d+8\gamma-8)}\epsilon^{2\gamma}|r_2-r_1|^{\gamma}
\end{align*}
Applying Theorem \ref{kolmogorov}, we conclude the proof.
\item We deduce from Theorem \ref{kolmogorov} that for every $U \in \mathbb{R}_+,\lim_{\epsilon\to0}\mathds{E}[\sup_{r \in [0,U]}\Vert\widetilde{Y}_{r,\epsilon}-\widetilde{Y}_r\Vert_{\theta}]=0.$
\end{enumerate}
\end{proof}
For all $(r,\epsilon) \in \mathbb{R}_+\times \mathbb{R}_+^*,$ let
\begin{align*}
G_{r,\epsilon}:L^2(\mathbb{T}^d)&\longrightarrow L^2(\Omega)\\
\phi&\longmapsto G_{r,\epsilon}(\phi)=\int_{([0,r]\times \mathbb{T}^d)^3}\!(\int_{\mathbb{T}^d}\!\phi(x)\prod_{m=1}^3(\zeta_\epsilon*\eta_{r-q_m})(x-y_m)\,\mathrm{d}x)\,\xi(\mathrm{d}q_1,\mathrm{d}y_1)\,\xi(\mathrm{d}q_2,\mathrm{d}y_2)\,\xi(\mathrm{d}q_3,\mathrm{d}y_3)
\end{align*}
We note that for every $(r,\epsilon,\phi) \in \mathbb{R}_+\times \mathbb{R}_+^*\times \mathscr{S}$ and for $\mathds{P}$-almost every $\omega \in \Omega,$ $$(G_{r,\epsilon}(\phi))(\omega)=\int_{\mathbb{T}^d}\!\phi(x)(((\widetilde{X}_{r,\epsilon}(\omega))(x))^3-3(\widetilde{X}_{r,\epsilon}(\omega))(x)\mathds{E}[|\widetilde{X}_{r,\epsilon}(x)|^2])\,\mathrm{d}x.$$
\begin{theorem}
\label{theorem3}
If $(d\in \{4,5\},\theta \in ]-\infty,6-\frac{3d}{2}[,$ then 
\begin{enumerate}
\item there exist a stochastic process $(\widetilde{G}_r)_{r \in \mathbb{R}_+}$ of state space $(\mathscr{C}^\theta,\mathcal{B}(\mathscr{C}^\theta))$ such that the sample paths of $(\widetilde{G}_q)_{q \in \mathbb{R}_+}$ are continuous and for all $(r,\phi) \in \mathbb{R}_+\times \mathscr{S},\mathds{P}(\widetilde{G}_r(\phi)=G_r(\phi))=1.$
\item for every $\epsilon \in \mathbb{R}_+^*,$ there exist a stochastic process $(\widetilde{G}_{r,\epsilon})_{r \in \mathbb{R}_+}$ of state space $(\mathscr{C}^\theta,\mathcal{B}(\mathscr{C}^\theta))$ such that the sample paths of $(\widetilde{G}_{q,\epsilon})_{q \in \mathbb{R}_+}$ are continuous and for all $(r,\phi) \in \mathbb{R}_+\times \mathscr{S},\mathds{P}(\widetilde{G}_{r,\epsilon}(\phi)=G_{r,\epsilon}(\phi))=1.$
\item for every $(U,\sigma) \in \mathbb{R}_+\times \mathbb{R}_+^*,$ $$\lim_{\epsilon\to0}\mathds{P}(\sup_{r \in [0,U]}\Vert\widetilde{G}_{r,\epsilon}-\widetilde{G}_r\Vert_{\theta}>\sigma)=0.$$
\end{enumerate}
\end{theorem}
\begin{proof}
\begin{enumerate}
\item We have $$\forall r \in \mathbb{R}_+,\sum_{m \in \mathbb{Z}^d}\frac{1}{(1+|m|)^{d+1}}\mathds{E}[|G_r(\cos(2\pi\langle\mathrm{Id}_{\mathbb{T}^d},m\rangle))|+|G_r(\sin(2\pi\langle\mathrm{Id}_{\mathbb{T}^d},m\rangle))|]<+\infty.$$
Therefore there exist a stochastic process $(\wideparen{G}_q)_{q \in \mathbb{R}_+}$ of state space $(\mathscr{S}',\mathcal{B}(\mathscr{S}'))$ such that for every $(r,\phi) \in \mathbb{R}_+\times \mathscr{S},\mathds{P}(\wideparen{G}_r(\phi)=G_r(\phi))=1.$

Let $(\beta,\gamma) \in \mathbb{R}_+^*\times ]0,1-\frac{d}{6}[.$

It follows from Theorem \ref{nelson} and Lemma \ref{lemma2} that there exists $(c_1,c_2,c_3,c_4,c_5) \in (\mathbb{R}_+^*)^{5}$ such that for all $(r_1,r_2,j,x) \in (\mathbb{R}_+)^2\times\mathbb{N}\times \mathbb{T}^d,$ if $r_1\leq r_2,$ then  
\begin{align*}
\mathds{E}[|\Delta_{j-1}\wideparen{G}_{r_2}(x)-\Delta_{j-1}\wideparen{G}_{r_1}(x)|^\beta]^{\frac{2}{\beta}}&\leq c_{1}\mathds{E}[|\Delta_{j-1}\wideparen{G}_{r_2}(x)-\Delta_{j-1}\wideparen{G}_{r_1}(x)|^2]\\
&\leq c_{1}\int_{[0,r_1]^3}\!(\int_{(\mathbb{T}^d)^3}\!|\int_{\mathbb{T}^d}\!K_{j-1}(x-y)(\prod_{q=1}^3\eta_{r_2-\alpha_q}(y-v_q)\\
&\phantom{\leq}-\prod_{m=1}^3\eta_{r_1-\alpha_m}(y-v_m))\,\mathrm{d}y|^2\mathrm{d}v_1\,\mathrm{d}v_2\,\mathrm{d}v_3)\,\mathrm{d}\alpha_1\,\mathrm{d}\alpha_2\,\mathrm{d}\alpha_3\\
& \phantom{\leq}\ +c_{1}\int_{[r_1,r_2]^3}\!(\int_{(\mathbb{T}^d)^3}\!|\int_{\mathbb{T}^d}\!K_{j-1}(x-y)\prod_{q=1}^3\eta_{r_2-\alpha_q}(y-v_q)\,\mathrm{d}y|^2\\
&\phantom{\leq}\mathrm{d}v_1\,\mathrm{d}v_2\,\mathrm{d}v_3)\,\mathrm{d}\alpha_1\mathrm{d}\alpha_2\,\mathrm{d}\alpha_3\\
&\leq c_{1}\int_{[0,r_1]^3}\!(\int_{(\mathbb{T}^d)^5}\!\prod_{m=1}^2K_{j-1}(y_m)(\prod_{q=1}^3\eta_{r_2-\alpha_q}(y_m-v_q)\\
&\phantom{\leq}-\prod_{n=1}^3\eta_{r_1-\alpha_n}(y_m-v_n))\,\mathrm{d}v_1\,\mathrm{d}v_2\,\,\mathrm{d}v_3\,\mathrm{d}y_1\,\mathrm{d}y_2)\,\mathrm{d}\alpha_1\,\mathrm{d}\alpha_2\,\mathrm{d}\alpha_3\\
& \phantom{\leq}\ +c_{1}\int_{[r_1,r_2]^3}\!(\int_{(\mathbb{T}^d)^5}\!\prod_{m=1}^2\prod_{q=1}^3K_{j-1}(y_m)\eta_{r_2-\alpha_q}(y_m-v_q)\\
&\phantom{\leq}\,\mathrm{d}v_1\,\mathrm{d}v_2\,\mathrm{d}v_3\,\mathrm{d}y_1\,\mathrm{d}y_2)\,\mathrm{d}\alpha_1\,\mathrm{d}\alpha_2\,\mathrm{d}\alpha_3\\
&\leq c_{1}\int_{[0,r_1]^3}\!(\int_{(\mathbb{T}^d)^2}K_{j-1}(y_1)K_{j-1}(y_2)(\prod_{q=1}^3\eta_{2(r_2-\alpha_q)}(y_1-y_2)\\
&\phantom{\leq}-\prod_{m=1}^3\eta_{r_2+r_1-2\alpha_m}(y_1-y_2))\,\mathrm{d}y_1\,\mathrm{d}y_2)\,\mathrm{d}\alpha_1\,\mathrm{d}\alpha_2\,\mathrm{d}\alpha_3\\
& \phantom{\leq}\ +c_{1}\int_{[0,r_1]^3}\!(\int_{(\mathbb{T}^d)^2}K_{j-1}(y_1)K_{j-1}(y_2)(\prod_{q=1}^3\eta_{2(r_1-\alpha_q)}(y_1-y_2)\\
&\phantom{\leq}-\prod_{m=1}^3\eta_{r_2+r_1-2\alpha_m}(y_1-y_2))\,\mathrm{d}y_1\,\mathrm{d}y_2)\,\mathrm{d}\alpha_1\,\mathrm{d}\alpha_2\,\mathrm{d}\alpha_3\\
& \phantom{\leq}\ +c_{1}\int_{[r_1,r_2]^3}\!(\int_{(\mathbb{T}^d)^2}\!K_{j-1}(y_1)K_{j-1}(y_2)\prod_{q=1}^3\eta_{2(r_2-\alpha_q)}(y_1-y_2)\\
&\phantom{\leq}\,\mathrm{d}y_1\,\mathrm{d}y_2)\,\mathrm{d}\alpha_1\mathrm{d}\alpha_2\,\mathrm{d}\alpha_3\\
&\leq c_{1}\sum_{(m_1,m_2,m_3) \in (\mathbb{Z}^d)^3}|\rho_{j-1}(m_1+m_2+m_3)|^2\\
& \phantom{\leq}\int_{[0,r_1]^3}\!(\prod_{q=1}^3\mathrm{e}^{-|2\pi m_q|^4(r_2-\alpha_q)}-\prod_{n=1}^3\mathrm{e}^{-|2\pi m_n|^4(r_1-\alpha_n)})^2\,\mathrm{d}\alpha_1\,\mathrm{d}\alpha_2\,\mathrm{d}\alpha_3
\end{align*}
\begin{align*}
&+c_{1}\sum_{(m_1,m_2,m_3) \in (\mathbb{Z}^d)^3}|\rho_{j-1}(m_1+m_2+m_3)|^2\int_{[0,r_2-r_1]^3}\!\prod_{q=1}^3\mathrm{e}^{-2\alpha_q|2\pi m_q|^4}\,\mathrm{d}\alpha_1\,\mathrm{d}\alpha_2\,\mathrm{d}\alpha_3\\
&\leq c_{2}|r_2-r_1|^{3\gamma}\sum_{(m_1,m_2,m_3) \in (\mathbb{Z}^d)^3}|\rho_{j-1}(m_1+m_2+m_3)|^2\prod_{q=1}^3\frac{1}{(1+|m_q|^4)^{1-\gamma}}\\
&\leq c_2|r_2-r_1|^{3\gamma}\sum_{(m_1,m_2) \in (\mathbb{Z}^d)^2}|\rho_{j-1}(m_1+m_2)|^2(1+|m_1|^4)^{\gamma-1}\sum_{m_3 \in \mathbb{Z}^d}(1+|m_2-m_3|^4)^{\gamma-1}(1+|m_3|^4)^{\gamma-1}\\
&\leq c_3|r_2-r_1|^{3\gamma}\sum_{(m_1,m_2) \in (\mathbb{Z}^d)^2}|\rho_{j-1}(m_1+m_2)|^2(1+|m_1|^4)^{\gamma-1}(1+|m_2|^4)^{\frac{d}{4}+2\gamma-2}\\
&\leq c_3|r_2-r_1|^{3\gamma}\sum_{m_1 \in \mathbb{Z}^d}|\rho_{j-1}(m_1)|^2\sum_{m_2 \in \mathbb{Z}^d}(1+|m_1-m_2|^4)^{\gamma-1}(1+|m_2|^4)^{\frac{d}{4}+2\gamma-2}\\
&\leq c_{4}|r_2-r_1|^{3\gamma}\sum_{m \in \mathbb{Z}^d}|\rho_{j-1}(m)|^2(1+|m|^4)^{\frac{d}{2}+3\gamma-3}\\
&\leq c_{5}2^{(j-1)(3d+12\gamma-12)}|r_2-r_1|^{3\gamma}
\end{align*}
Applying Theorem \ref{kolmogorov}, we complete the proof.
\item For all $\epsilon \in \mathbb{R}_+^*,$ there exist a stochastic process $(\wideparen{G}_{q,\epsilon})_{q \in \mathbb{R}_+}$ of state space $(\mathscr{S}',\mathcal{B}(\mathscr{S}'))$ such that for every $(r,\phi) \in \mathbb{R}_+\times \mathscr{S},\mathds{P}(\wideparen{G}_{r,\epsilon}(\phi)=G_{r,\epsilon}(\phi))=1.$

We fix $(\beta,\gamma) \in \mathbb{R}_+^*\times ]0,1-\frac{d}{6}[.$

It follows from Theorem \ref{nelson} and Lemma \ref{lemma2} that there exists $(c_1,c_2,c_3) \in (\mathbb{R}_+^*)^3$ such that for all $(r_1,r_2,\epsilon,j,x) \in (\mathbb{R}_+)^2\times \mathbb{R}_+^*\times \mathbb{N}\times \mathbb{T}^d,$ if $r_1\leq r_2,$ then 
\begin{align*}
\mathds{E}[|\Delta_{j-1}\wideparen{G}_{r_2,\epsilon}(x)
-\Delta_{j-1}\widetilde{G}_{r_2}(x)\\-\Delta_{j-1}\wideparen{G}_{r_1,\epsilon}(x)+\Delta_{j-1}\widetilde{G}_{r_1}(x)|^\beta]^{\frac{2}{\beta}}&\leq c_1\mathds{E}[|\Delta_{j-1}\wideparen{G}_{r_2,\epsilon}(x)-\Delta_{j-1}\widetilde{G}_{r_2}(x)
-\Delta_{j-1}\wideparen{G}_{r_1,\epsilon}(x)\\
&\phantom{\leq}\ +\Delta_{j-1}\widetilde{G}_{r_1}(x)|^2]\\
&\leq c_1\sum_{(m_1,m_2,m_3) \in (\mathbb{Z}^d)^3}|\rho_{j-1}(m_1+m_2+m_3)|^2\\
&\phantom{\leq}\int_{[0,r_1]^3}\!(\prod_{q=1}^3(\zeta(\epsilon m)-1)^2\mathrm{e}^{-|2\pi m_q|^4(r_2-\alpha_q)}\\
&\phantom{\leq}-\prod_{n=1}^3(\zeta(\epsilon m_n)-1)^2\mathrm{e}^{-|2\pi m_n|^4(r_1-\alpha_n)})^2\,\mathrm{d}\alpha_1\,\mathrm{d}\alpha_2\,\mathrm{d}\alpha_3\\
& \phantom{\leq}\ +c_1\sum_{(m_1,m_2,m_3) \in (\mathbb{Z}^d)^3}|\rho_{j-1}(m_1+m_2+m_3)|^2\\
&\phantom{\leq}\int_{[0,r_2-r_1]^3}\!\prod_{q=1}^3(\zeta(\epsilon m_q)-1)^2\mathrm{e}^{-2\alpha_q|2\pi m_q|^4}\,\mathrm{d}\alpha_1\,\mathrm{d}\alpha_2\,\mathrm{d}\alpha_3\\
&\leq c_2\epsilon^{3\gamma}|r_2-r_1|^{\frac{3\gamma}{2}}\\
&\phantom{\leq} \sum_{(m_1,m_2,m_3) \in (\mathbb{Z}^d)^3}|\rho_{j-1}(m_1+m_2+m_3)|^2\prod_{q=1}^3\frac{1}{(1+|m_q|^4)^{1-\gamma}}\\
&\leq c_32^{(j-1)(3d+12\gamma-12)}\epsilon^{3\gamma}|r_2-r_1|^{\frac{3\gamma}{2}}.
\end{align*}
Applying Theorem \ref{kolmogorov}, we conclude the proof.
\item We deduce from Theorem \ref{kolmogorov} that for every $U \in \mathbb{R}_+,\lim_{\epsilon\to0}\mathds{E}[\sup_{r \in [0,U]}\Vert\widetilde{G}_{r,\epsilon}-\widetilde{G}_r\Vert_{\theta}]=0.$
\end{enumerate}
\end{proof}
\section{Solving the stochastic Cahn-Hilliard equation}
\label{section4}
Finally, we are ready to solve our equation.
\begin{proposition}
\label{proposition1}
If $d \in \{1,2,3\},\alpha \in ]0,2-\frac{d}{2}[,$ and $(g_1,g_2) \in \mathscr{C}^\alpha\times (\bigcap_{U \in \mathbb{R}_+}C_U\mathscr{C}^\alpha),$ then there exists $\beta \in ]0,+\infty]$ such that $$\exists ! f \in \bigcap_{U \in [0,\beta[}C_U\mathscr{C}^\alpha,\forall r \in [0,\beta[,f(r)=P_rg_1+g_2(r)+\int_0^r\!P_{r-q}(\Delta((f(q))^3-f(q)))\,\mathrm{d}q$$ and $$\beta \in \mathbb{R}_+^*\implies \lim_{q\to\beta}\Vert f(q)\Vert_{\alpha}=+\infty.$$
\end{proposition}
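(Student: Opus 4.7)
The plan is a standard Banach fixed-point argument in $C_U\mathscr{C}^\alpha$ for small $U$, followed by a maximal extension step producing the blow-up alternative.

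First I would define, for each $U \in \mathbb{R}_+^*$, the map $\Phi_U : C_U\mathscr{C}^\alpha \to C_U\mathscr{C}^\alpha$ by
\[
\Phi_U(f)(r) = P_r g_1 + g_2(r) + \int_0^r P_{r-q}\bigl(\Delta\bigl((f(q))^3 - f(q)\bigr)\bigr)\,\mathrm{d}q,
\]
and check that it stabilises a closed ball $B_R := \{f \in C_U\mathscr{C}^\alpha : \|f\|_{C_U\mathscr{C}^\alpha} \leq R\}$. Three ingredients make this work. Theorem~\ref{Schauder}(2) handles the first term, giving $r \mapsto P_r g_1 \in C_U\mathscr{C}^\alpha$ with norm controlled by $\|g_1\|_\alpha$. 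Using the hypothesis $\alpha > 0$, Corollary~\ref{product} applied twice yields $\|f(q)^3\|_\alpha \leq c\|f(q)\|_\alpha^3$, so that $f^3 - f \in C_U\mathscr{C}^\alpha$ with norm at most $c(R^3 + R)$; Theorem~\ref{derivative} then places $\Delta(f^3 - f)$ in $C_U\mathscr{C}^{\alpha-2}$. Finally Theorem~\ref{Schauder}(1) applied with regularity gain $\beta = 2 < 4$ bounds the integral term in $C_U\mathscr{C}^\alpha$ by $c\max(U, U^{1/2})(R^3 + R)$. Fixing $R$ as a suitable multiple of $\|g_1\|_\alpha + \|g_2\|_{C_U\mathscr{C}^\alpha} + 1$ and then choosing $U$ small enough, $\Phi_U$ sends $B_R$ into itself.

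Next I would prove the contraction. Factorising $f_1^3 - f_2^3 = (f_1 - f_2)(f_1^2 + f_1 f_2 + f_2^2)$ and invoking Corollary~\ref{product} on each product, together with Theorem~\ref{derivative} and Theorem~\ref{Schauder}(1) as above, yields
\[
\|\Phi_U(f_1) - \Phi_U(f_2)\|_{C_U\mathscr{C}^\alpha} \leq c\max(U, U^{1/2})(1 + R^2)\|f_1 - f_2\|_{C_U\mathscr{C}^\alpha}.
\]
Shrinking $U$ once more makes the prefactor strictly less than $1$, and Banach's fixed-point theorem produces a unique solution $f_U \in B_R$ on $[0, U]$.

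For the maximal solution, I would set $\beta := \sup\{U \in \mathbb{R}_+^* : \text{a solution in } C_U\mathscr{C}^\alpha \text{ exists}\}$. Local uniqueness together with a connectedness argument (the agreement set is open by local uniqueness and closed by continuity) shows any two solutions agree on their common domain, so they glue into $f \in \bigcap_{U \in [0,\beta[} C_U\mathscr{C}^\alpha$. Suppose towards contradiction that $\beta < \infty$ and $\limsup_{q \to \beta^-} \|f(q)\|_\alpha < \infty$. For $q^* < \beta$, a direct change of variables shows that $r \mapsto f(q^* + r)$ satisfies the same mild equation with new initial datum $f(q^*) - g_2(q^*) \in \mathscr{C}^\alpha$ and new forcing $g_2(q^* + \cdot)$. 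The local existence time from the first two steps depends only on the norms of these data, which remain uniformly bounded as $q^* \uparrow \beta$. Choosing $q^*$ close enough to $\beta$ then yields a solution on an interval strictly longer than $[0, \beta[$, contradicting the definition of $\beta$. Therefore $\beta < \infty$ forces $\lim_{q \to \beta^-} \|f(q)\|_\alpha = +\infty$.

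The main obstacle is the coupled choice of parameters in the first step: $R$ must dominate $\|g_1\|_\alpha + \|g_2\|_{C_U\mathscr{C}^\alpha}$, but the Schauder prefactor $\max(U, U^{1/2})$ is then multiplied by $R^3$ in both the self-map and contraction estimates, so $U$ must be chosen \emph{after} $R$. The remaining subtlety is verifying in the extension step that the local existence time, as a function of the data, is bounded below by a positive quantity uniform in $q^* \in [0, \beta[$ whenever $\|f(q^*)\|_\alpha$ stays bounded; this uses the continuity of $f$ and of $g_2$.
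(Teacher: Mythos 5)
Your proposal is correct and follows essentially the same route as the paper: the same fixed-point map (the paper's $\Upsilon_{U,g_1,g_2}$ is your $\Phi_U$), the same three estimates (Theorem~\ref{derivative}, Corollary~\ref{product} using $\alpha>0$, and Theorem~\ref{Schauder} with gain $2$ giving the prefactor $\max\left(U,U^{\frac{1}{2}}\right)$), a small-time contraction on a ball of radius comparable to $\left\Vert g_1\right\Vert_{\alpha}+\left\Vert g_2\right\Vert_{C_1\mathscr{C}^\alpha}$, and then iteration to the maximal time. You merely spell out the restarting and blow-up alternative that the paper compresses into ``iterating this construction.''
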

\begin{proof}
Let $\gamma:=2(\Vert g_1\Vert_{\alpha}+\Vert g_2\Vert_{C_1\mathscr{C}^\alpha}).$

For all $U \in \mathbb{R}_+,$ let $\Upsilon_{U,g_1,g_2}:C_U\mathscr{C}^\alpha\longrightarrow C_U\mathscr{C}^\alpha$ such that for every $\phi \in C_U\mathscr{C}^\alpha,$
\begin{align*}
\Upsilon_{U,g_1,g_2}(\phi):[0,U]&\longrightarrow \mathscr{C}^\alpha\\
r&\longmapsto (\Upsilon_{U,g_1,g_2}(\phi))(r)=P_rg_1+g_2(r)+\int_0^r\!P_{r-q}(\Delta((\phi(q))^3-\phi(q)))\,\mathrm{d}q
\end{align*}
It follows from Theorem \ref{derivative}, Corollary \ref{product}, and Theorem \ref{Schauder} that there exists $c \in \mathbb{R}_+^*$ such that for any $U \in \mathbb{R}_+$ and all $(\phi_1,\phi_2) \in (C_U\mathscr{C}^\alpha)^2,$ $$\Vert\Upsilon_{U,g_1,g_2}(\phi_1)\Vert_{C_U\mathscr{C}^\alpha}\leq \Vert g_1\Vert_{\alpha}+\Vert g_2\Vert_{C_U\mathscr{C}^\alpha}+c\max(U,U^{\frac{1}{2}})(\Vert \phi_1\Vert_{C_U\mathscr{C}^\alpha}^3+\Vert\phi_1\Vert_{C_U\mathscr{C}^\alpha})$$ and $$\Vert\Upsilon_{U,g_1,g_2}(\phi_2)-\Upsilon_{U,g_1,g_2}(\phi_1)\Vert_{C_U\mathscr{C}^\alpha}\leq c\max(U,U^{\frac{1}{2}})(\Vert\phi_1\Vert_{C_U\mathscr{C}^\alpha}^2+\Vert\phi_2\Vert_{C_U\mathscr{C}^\alpha}^2+1)\Vert\phi_2-\phi_1\Vert_{C_U\mathscr{C}^\alpha}.$$
Hence there exists $\theta \in ]0,1]$ such that for every $(\phi_1,\phi_2) \in (C_\theta\mathscr{C}^\alpha)^2,$ if $$\max(\Vert\phi_1\Vert_{C_\theta\mathscr{C}^\alpha},\Vert\phi_2\Vert_{C_\theta\mathscr{C}^\alpha})\leq\gamma,$$ then $$\Vert\Upsilon_{\theta,g_1,g_2}(\phi_1)\Vert_{C_\theta\mathscr{C}^\alpha}\leq \gamma$$ and $$\Vert\Upsilon_{\theta,g_1,g_2}(\phi_2)-\Upsilon_{\theta,g_1,g_2}(\phi_1)\Vert_{C_\theta\mathscr{C}^\alpha}\leq \frac{1}{2}\Vert\phi_2-\phi_1\Vert_{C_\theta\mathscr{C}^\alpha}.$$
Therefore $$\exists !h  \in C_\theta\mathscr{C}^\alpha,\Upsilon_{\theta,g_1,g_2}(h)=h.$$
Iterating this construction, we complete the proof.
\end{proof}
We deduce from Proposition \ref{proposition1} the following theorem.
\begin{theorem}
If $d \in \{1,2,3\},\alpha \in ]0,2-\frac{d}{2}[,$ and $g \in \mathscr{C}^\alpha,$ then there exist an explosion time $\tau:\Omega\longrightarrow ]0,+\infty]$ and a unique stochastic process $(f_r)_{r \in [0,\tau[}$ of state space $(\mathscr{C}^\alpha,\mathcal{B}(\mathscr{C}^\alpha))$ such that for every $\omega \in \Omega,$ the function $\begin{aligned}[t]
[0,\tau(\omega)[&\longrightarrow \mathscr{C}^\alpha\\
r &\longmapsto f_r(\omega)
\end{aligned}$ is continuous on $[0,\tau(\omega)[$ and $$\forall U \in [0,\tau(\omega)[,f_U(\omega)=P_Ug+\widetilde{X}_U(\omega)+\int_0^U\!P_{U-q}(\Delta((f_q(\omega))^3-f_q(\omega)))\,\mathrm{d}q.$$
\end{theorem}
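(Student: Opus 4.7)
The plan is to apply Proposition \ref{proposition1} pathwise, using the stochastic convolution $\widetilde{X}$ from Theorem \ref{theorem1} as the forcing term $g_2$. Since $\alpha \in \left]0, 2 - \frac{d}{2}\right[$, Theorem \ref{theorem1} applied with $\theta = \alpha$ produces a continuous modification $\widetilde{X}$ whose sample paths, restricted to $\left[0,U\right]$ for any $U \in \mathbb{R}_+$, belong to $C_U\mathscr{C}^\alpha$. Fixing $\omega \in \Omega$, the pair $(g,\widetilde{X}_\cdot(\omega))$ then satisfies the hypotheses of Proposition \ref{proposition1}, yielding a maximal existence time $\beta(\omega) \in \left]0,+\infty\right]$ and a pathwise unique $f(\omega) \in \bigcap_{U \in \left[0,\beta(\omega)\right[} C_U\mathscr{C}^\alpha$ solving the mild equation. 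Setting $\tau(\omega) := \beta(\omega)$ and $f_r(\omega) := f(\omega)(r)$ provides the candidate explosion time and process; pathwise uniqueness and the blow-up alternative on $\{\tau < +\infty\}$ are inherited directly from Proposition \ref{proposition1}.

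The remaining issue is to show that $\tau$ is a random variable and that $(f_r)_{r \in \left[0,\tau\right[}$ is a genuine stochastic process. Because $\widetilde{X}$ is continuous with values in the separable Banach space $\mathscr{C}^\alpha$, the map $\omega \mapsto \widetilde{X}_\cdot(\omega)$ is $(\mathcal{F},\mathcal{B}(C_U\mathscr{C}^\alpha))$-measurable for every $U \in \mathbb{R}_+$. Revisiting the Picard iteration underlying Proposition \ref{proposition1}, starting from $f^{(0)} \equiv 0$ and setting $f^{(n+1)} := \Upsilon_{U,g,\widetilde{X}(\omega)}(f^{(n)})$, each iterate depends polynomially, hence continuously, on $\widetilde{X}_\cdot(\omega)$, so it is $\mathcal{F}$-measurable. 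The contraction radius $\theta$ produced by Proposition \ref{proposition1} is a deterministic function of $\|g\|_\alpha + \|\widetilde{X}_\cdot(\omega)\|_{C_1\mathscr{C}^\alpha}$, which is itself a random variable; partitioning $\Omega$ by level sets of this norm yields a common deterministic contraction time on each piece, on which the iterates converge to a measurable fixed point. Iterating this construction on successive random sub-intervals and defining $\tau$ as the supremum of the corresponding existence times (a supremum over a countable family of measurable quantities) produces the required process.

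The main obstacle I anticipate is the bookkeeping needed to glue these local measurable solutions into a single stochastic process on $\left[0,\tau\right[$. At each step one must verify that the random extension time $\sigma_{n+1}$ depends measurably on the current state $f_{\sigma_n}$ and on $\widetilde{X}$, and that the composite sample path $r \mapsto f_r(\omega)$ inherits joint measurability in $(r,\omega)$. This should follow from an inductive argument built on continuous dependence of the fixed point on its data, but writing it out cleanly requires some care with the stopping-time structure.
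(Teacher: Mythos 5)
Your proposal is correct and follows exactly the route the paper intends: the paper offers no written proof beyond the phrase ``We deduce from Proposition \ref{proposition1} the following theorem,'' i.e.\ a pathwise application of Proposition \ref{proposition1} with $g_1=g$ and $g_2=\widetilde{X}_{\cdot}\left(\omega\right),$ justified by Theorem \ref{theorem1} with $\theta=\alpha.$ Your additional discussion of the measurability of $\tau$ and of $\left(f_r\right)_{r\in\left[0,\tau\right[}$ addresses a point the paper silently omits, and your sketch of how to handle it is sound.
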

For all $\epsilon \in \mathbb{R}_+,$ let
\begin{align*}
\psi_\epsilon:\mathbb{R}_+&\longrightarrow \mathbb{R}_+\\
r&\longmapsto \psi_\epsilon(r)=r+\sum_{q \in \mathbb{Z}^d-\{0\}}\frac{1}{2|2\pi q|^4}(\zeta(\epsilon q))^2(1-\mathrm{e}^{-2r|2\pi q|^4})
\end{align*}
We note that $$\forall (r,\epsilon,x) \in \mathbb{R}_+\times \mathbb{R}_+^*\times \mathbb{T}^d,\mathds{E}[|\widetilde{X}_{r,\epsilon}(x)|^2]=\psi_{\epsilon}(r).$$
\begin{proposition}
\label{proposition2}
If $(d,\alpha) \in \{4\}\times ]0,2[$ and $(g_1,g_2,g_3,g_4,g_5,g_6,g_7,g_8) \in (\mathscr{C}^\alpha)^2\times (\bigcap_{(U,\epsilon) \in \mathbb{R}_+\times \mathbb{R}_+^*}C_U\mathscr{C}^{-\epsilon})^6,$ then there exists $(c,\gamma_1,\gamma_2,\beta_1,\beta_2) \in (\mathbb{R}_+^*)^2\times ]0,1[\times ]0,+\infty]^2$ such that $$\exists !f_1 \in \bigcap_{U \in [0,\beta_1[}C_U\mathscr{C}^\alpha,\forall r \in [0,\beta_1[,f_1(r)=P_rg_1+\int_0^r\!P_{r-q}(\Delta((f_1(q))^3+3(f_1(q))^2g_2(q)+3f(q)g_3(q)+g_4(q)$$ $$-f_1(q)-g_2(q)))\,\mathrm{d}q,$$ $$\beta_1 \in \mathbb{R}_+^*\implies \lim_{q\to \beta_1}\Vert f_1(q)\Vert_{\alpha}=+\infty,$$ $$\exists ! f_2 \in \bigcap_{U \in [0,\beta_2[}C_U\mathscr{C}^\alpha,\forall r \in [0,\beta_2[,f_2(r)=P_rg_5+\int_0^r\!P_{r-q}(\Delta((f_2(q))^3+3(f_2(q))^2g_6(q)+3f_2(q)g_7(q)+g_8(q)$$ $$-f_2(q)-g_6(q)))\,\mathrm{d}q,$$ $$\beta_2 \in \mathbb{R}_+^* \implies \lim_{q\to\beta_2}\Vert f_2(q)\Vert_\alpha=+\infty,$$ and for every $(\sigma,U) \in \mathbb{R}_+\times [0,\min(\beta_1,\beta_2)[,$ if $\max(\Vert f_1\Vert_{C_U\mathscr{C}^\alpha},\Vert f_2\Vert_{C_U\mathscr{C}^\alpha})\leq \sigma,$ then $$\Vert f_2-f_1\Vert_{C_U\mathscr{C}^\alpha}\leq (\Vert g_5-g_1\Vert_{\alpha}+c\max(U,U^{1-\gamma_2})((\sigma^2+1)\Vert g_6-g_2\Vert_{C_U\mathscr{C}^{-\gamma_1}}+\sigma \Vert g_7-g_3\Vert_{C_U\mathscr{C}^{-\gamma_1}}+\Vert g_8-g_4\Vert_{C_U\mathscr{C}^{-\gamma_1}}))$$ $$\mathrm{e}^{c\max(U,U^{1-\gamma_2})(2\sigma^2+2\sigma \Vert g_6\Vert_{C_U\mathscr{C}^{-\gamma_1}}+\Vert g_7\Vert_{C_U\mathscr{C}^{-\gamma_1}}+1)}$$
\end{proposition}
\begin{proof}
For all $(U,h_1,h_2,h_3,h_4) \in \mathbb{R}_+\times \mathscr{C}^\alpha\times (\bigcap_{(r,\epsilon) \in \mathbb{R}_+\times \mathbb{R}_+^*}C_r\mathscr{C}^{-\epsilon})^3,$ let $\Upsilon_{U,h_1,h_2,h_3,h_4}:C_U\mathscr{C}^\alpha\longrightarrow C_U\mathscr{C}^\alpha$ such that for every $\phi \in C_U\mathscr{C}^\alpha,$
\begin{align*}
\Upsilon_{U,h_1,h_2,h_3,h_4}(\phi):[0,U]&\longrightarrow \mathscr{C}^\alpha\\
r&\longmapsto 
\begin{aligned}[t]
(\Upsilon_{U,h_1,h_2,h_3,h_4}(\phi))(r)&=P_rh_1+\int_0^r\!P_{r-q}(\Delta((\phi(q))^3+3(\phi(q))^2h_2(q)\\
&\phantom{=}\ +3\phi(q)h_3(q)
+h_4(q)-\phi(q)-h_2(q)))\,\mathrm{d}q
\end{aligned}
\end{align*}
Let $(h_1,h_2,h_3,h_4) \in \{(g_1,g_2,g_3,g_4),(g_5,g_6,g_7,g_8)\},\gamma_1:=\frac{1}{2} \min(\alpha,2-\alpha),\gamma_2:=\frac{1}{4}(2+\alpha+\gamma_1),$ and $\gamma_3:=3(\Vert h_1\Vert_{\alpha}+\Vert h_4\Vert_{C_1\mathscr{C}^{-\gamma_1}}).$

It follows from Theorem \ref{derivative}, Corollary \ref{product}, and Theorem \ref{Schauder} that there exists $c_1 \in \mathbb{R}_+^*$ such that for any $U \in \mathbb{R}_+$ and all $(\phi_1,\phi_2) \in (C_U\mathscr{C}^\alpha)^2,$ $$\Vert \Upsilon_{U,h_1,h_2,h_3,h_4}(\phi_1)\Vert_{C_U\mathscr{C}^\alpha}\leq \Vert h_1\Vert_{\alpha}+c_1\max(U,U^{1-\gamma_2})(\Vert \phi_1\Vert_{C_U\mathscr{C}^\alpha}^3+\Vert h_2\Vert_{C_U\mathscr{C}^{-\gamma_1}}(\Vert \phi_1\Vert_{C_U\mathscr{C}^\alpha}^2+1)$$ $$+\Vert \phi_1\Vert_{C_U\mathscr{C}^\alpha}(\Vert h_3\Vert_{C_U\mathscr{C}^{-\gamma_1}}+1)+\Vert h_4\Vert_{C_U\mathscr{C}^{-\gamma_1}})$$ and $$\Vert\Upsilon_{U,h_1,h_2,h_3,h_4}(\phi_2)-\Upsilon_{U,h_1,h_2,h_3,h_4}(\phi_1)\Vert_{C_U\mathscr{C}^\alpha}\leq c_1\max(U,U^{1-\gamma_2})\Vert \phi_2-\phi_1\Vert_{C_U\mathscr{C}^\alpha}(\Vert \phi_1\Vert_{C_U\mathscr{C}^\alpha}^2+\Vert \phi_2\Vert_{C_U\mathscr{C}^\alpha}^2$$ $$+\Vert h_2\Vert_{C_U\mathscr{C}^{-\gamma}}(\Vert \phi_1\Vert_{C_U\mathscr{C}^\alpha}+\Vert \phi_2\Vert_{C_U\mathscr{C}^\alpha})+\Vert h_3\Vert_{C_U\mathscr{C}^{-\gamma}}+1
).$$
Hence there exists $\theta \in ]0,1]$ such that for every $(\phi_1,\phi_2) \in (C_\theta\mathscr{C}^\alpha)^2,$ if $$\max(\Vert\phi_1\Vert_{C_\theta\mathscr{C}^\alpha},\Vert\phi_2\Vert_{C_\theta\mathscr{C}^\alpha})\leq\gamma_3,$$ then $$\Vert\Upsilon_{\theta,h_1,h_2,h_3,h_4}(\phi_1)\Vert_{C_\theta\mathscr{C}^\alpha}\leq \gamma_3$$ and $$\Vert\Upsilon_{\theta,h_1,h_2,h_3,h_4}(\phi_2)-\Upsilon_{\theta,h_1,h_2,h_3,h_4}(\phi_1)\Vert_{C_\theta\mathscr{C}^\alpha}\leq \frac{1}{2}\Vert\phi_2-\phi_1\Vert_{C_\theta\mathscr{C}^\alpha}.$$
Therefore $$\exists ! \chi\in C_\theta\mathscr{C}^\alpha,\Upsilon_{\theta,h_1,h_2,h_3,h_4}(\chi)=\chi.$$
Iterating this construction we deduce that there exists $(\beta_1,\beta_2) \in ]0,+\infty]^2$ such that $$\exists !f_1 \in \bigcap_{U \in [0,\beta_1[}C_U\mathscr{C}^\alpha,\forall r \in [0,\beta_1[,f_1(r)=P_rg_1+\int_0^r\!P_{r-q}(\Delta((f_1(q))^3+3(f_1(q))^2g_2(q)+3f(q)g_3(q)$$ $$+g_4(q)-f_1(q)-g_2(q)))\,\mathrm{d}q,$$ $$\beta_1 \in \mathbb{R}_+^*\implies \lim_{q\to \beta_1}\Vert f_1(q)\Vert_{\alpha}=+\infty,$$ $$\exists ! f_2 \in \bigcap_{U \in [0,\beta_2[}C_U\mathscr{C}^\alpha,\forall r \in [0,\beta_2[,f_2(r)=P_rg_5+\int_0^r\!P_{r-q}(\Delta((f_2(q))^3+3(f_2(q))^2g_6(q)+3f_2(q)g_7(q)$$ $$+g_8(q)-f_2(q)-g_6(q)))\,\mathrm{d}q,$$ and $$\beta_2 \in \mathbb{R}_+^* \implies \lim_{q\to\beta_2}\Vert f_2(q)\Vert_\alpha=+\infty.$$
It follows from Theorem \ref{derivative}, Corollary \ref{product}, Lemma \ref{lemma}, and Theorem \ref{Schauder} that there exists $c_2 \in \mathbb{R}_+^*$ such that for every $(\sigma,U) \in \mathbb{R}_+\times [0,\min(\beta_1,\beta_2)[,$ if $\max(\Vert f_1\Vert_{C_U\mathscr{C}^\alpha},\Vert f_2\Vert_{C_U\mathscr{C}^\alpha})\leq\sigma,$ then 
\begin{align*}
\forall r \in [0,U],\Vert f_2(r)-f_1(r)\Vert_{\alpha} &\leq \Vert(\Upsilon_{U,g_5,g_6,g_7,g_8}(f_2))(r)-(\Upsilon_{U,g_5,g_6,g_7,g_8}(f_1))(r)+P_rg_5\\
&\phantom{\leq}\ -P_rg_1+\int_0^r\!P_{r-q}(\Delta((f_1(q))^2(g_6(q)-g_2(q))+f_1(q)(g_7(q)-g_3(q))\\
&\phantom{\leq} +g_8(q)-g_4(q))+g_6(q)-g_2(q))\Vert_{\alpha}\\
&\leq \Vert g_5-g_1\Vert_{\alpha}+c_2\max(U,U^{1-\gamma_2})((\sigma^2+1)\Vert g_6-g_2\Vert_{C_U\mathscr{C}^{-\gamma_1}}+\sigma \Vert g_7-g_3\Vert_{C_U\mathscr{C}^{-\gamma_1}}\\
&\phantom{\leq}+\Vert g_8-g_4\Vert_{C_U\mathscr{C}^{-\gamma_1}})+c_2(2\sigma^2+2\sigma \Vert g_6\Vert_{C_U\mathscr{C}^{-\gamma_1}}+\Vert g_7\Vert_{C_U\mathscr{C}^{-\gamma_1}}+1)\\
&\phantom{\leq}\ \int_0^r\!\max(|r-q|^{-\gamma_2},1)\Vert f_2(q)-f_1(q)\Vert_{C_U\mathscr{C}^\alpha}\,\mathrm{d}q.
\end{align*}
Hence for all $(n,\sigma,U) \in \mathbb{N}\times \mathbb{R}_+\times [0,\min(\beta_1,\beta_2)[,$ if $\max(\Vert f_1\Vert_{C_U\mathscr{C}^\alpha},\Vert f_2\Vert_{C_U\mathscr{C}^\alpha})\leq \sigma,$ then $$\Vert f_2-f_1\Vert_{C_U\mathscr{C}^\alpha}\leq (\Vert g_5-g_1\Vert_{\alpha}+c_2\max(U,U^{1-\gamma_2})((\sigma^2+1)\Vert g_6-g_2\Vert_{C_U\mathscr{C}^{-\gamma_1}}+\sigma \Vert g_7-g_3\Vert_{C_U\mathscr{C}^{-\gamma_1}}$$ $$+\Vert g_8-g_4\Vert_{C_U\mathscr{C}^{-\gamma_1}}))\sum_{m=0}^n\frac{1}{m!}(c_2(2\sigma^2+2\sigma \Vert g_6\Vert_{C_U\mathscr{C}^{-\gamma_1}}+\Vert g_7\Vert_{C_U\mathscr{C}^{-\gamma_1}}+1)\int_0^U\!\max(1,q^{-\gamma_2})\,\mathrm{d}q)^m$$ $$+\frac{2\sigma}{(n+1)!}(c_2(2\sigma^2+2\sigma \Vert g_6\Vert_{C_U\mathscr{C}^{-\gamma_1}}+\Vert g_7\Vert_{C_U\mathscr{C}^{-\gamma_1}}+1)\int_0^U\!\max(1,q^{-\gamma_2})\,\mathrm{d}q)^{n+1}.$$
Letting $n\to+\infty$, we conclude the proof.
\end{proof}
We end this section by proving local well-posedness of the $4$-dimensional stochastic Cahn-Hilliard equation.
\begin{theorem}
If $(d,\alpha) \in \{4\}\times \mathbb{R}_-^*$ and $g \in \mathscr{C}^\alpha,$ then
\begin{enumerate}
\item there exist an explosion time $\tau:\Omega\longrightarrow ]0,+\infty]$ and a unique stochastic process $(h_r)_{r \in [0,\tau[}$ of state space $(\mathscr{C}^\alpha,\mathcal{B}(\mathscr{C}^\alpha))$ such that for every $\omega \in \Omega,$ the function $\begin{aligned}[t]
[0,\tau(\omega)[&\longrightarrow \mathscr{C}^\alpha\\
r &\longmapsto h_r(\omega)
\end{aligned}$ is continuous on $[0,\tau(\omega)[$ and $$\forall U \in [0,\tau(\omega)[,h_U(\omega)=P_Ug+\int_0^U\!P_{U-q}(\Delta((h_q(\omega))^3+3(h_q(\omega))^2\widetilde{X}_q(\omega)+3h_q(\omega)\widetilde{Y}_q(\omega)+\widetilde{G}_q(\omega)$$ $$-h_q(\omega)-\widetilde{X}_q(\omega)))\,\mathrm{d}q.$$
\item for every $\epsilon \in \mathbb{R}_+^*,$ there exist an explosion time $\tau_\epsilon:\Omega\longrightarrow ]0,+\infty]$ and a unique stochastic process $(f_{r,\epsilon})_{r \in [0,\tau_\epsilon[}$ of state space $(\mathscr{C}^\alpha,\mathcal{B}(\mathscr{C}^\alpha))$ such that for all $\varkappa \in \Omega,$ the function $\begin{aligned}[t]
[0,\tau_\epsilon(\varkappa)[&\longrightarrow \mathscr{C}^\alpha\\
r &\longmapsto f_{r,\epsilon}(\varkappa)
\end{aligned}$ is continuous on $[0,\tau_\epsilon(\varkappa)[$ and for $\mathds{P}$-almost every $\omega \in \Omega,$ $$\forall U \in [0,\tau_\epsilon(\omega)[,f_{U,\epsilon}(\omega)=P_U(\zeta_\epsilon*g)+\widetilde{X}_{U,\epsilon}(\omega)+\int_0^U\!P_{U-q}(\Delta((f_{q,\epsilon}(\omega))^3-3f_{q,\epsilon}(\omega)\psi_\epsilon(q)-f_{q,\epsilon}(\omega)))$$ $$\,\mathrm{d}q.$$
\item there exist a sequence $(\epsilon_m)_{m \in \mathbb{N}}$ in $\mathbb{R}_+^*$ and a sequence $(\nu_m)_{m \in \mathbb{N}}$ of finite stopping times such that $\lim_{q\to+\infty}\epsilon_q=0,\lim_{q\to+\infty}\nu_q=\tau,$ and $$\forall \sigma \in \mathbb{R}_+^*,\lim_{q\to+\infty}\mathds{P}(\sup_{r \in [0,\nu_q]}\Vert f_{r,\epsilon_q}-h_r-\widetilde{X}_r\Vert_{\alpha}>\sigma)=0.$$
\end{enumerate}
\end{theorem}
\begin{proof}
\begin{enumerate}
\item It follows from Proposition \ref{proposition2} that there exist an explosion time $\tau:\Omega\longrightarrow ]0,+\infty]$ and a unique stochastic process $(h_r)_{r \in [0,\tau[}$ of state space $(\mathscr{C}^\alpha,\mathcal{B}(\mathscr{C}^\alpha))$ such that for every $\omega \in \Omega,$ the function $\begin{aligned}[t]
[0,\tau(\omega)[&\longrightarrow \mathscr{C}^\alpha\\
r &\longmapsto h_r(\omega)
\end{aligned}$ is continuous on $[0,\tau(\omega)[$ and $$\forall U \in [0,\tau(\omega)[,h_U(\omega)=P_Ug+\int_0^U\!P_{U-q}(\Delta((h_q(\omega))^3+$$$$3(h_q(\omega))^2\widetilde{X}_q(\omega)+3h_q(\omega)\widetilde{Y}_q(\omega)+\widetilde{G}_q(\omega)-h_q(\omega)-\widetilde{X}_q(\omega)))\,\mathrm{d}q.$$
\item We deduce from Proposition \ref{proposition2} that for every $\epsilon \in \mathbb{R}_+^*,$ there exist an explosion time $\tau_\epsilon:\Omega\longrightarrow ]0,+\infty]$ and a unique stochastic process $(h_{r,\epsilon})_{r \in [0,\tau_\epsilon[}$ of state space $(\mathscr{C}^\alpha,\mathcal{B}(\mathscr{C}^\alpha))$ such that for every $\omega \in \Omega,$ the function $\begin{aligned}[t]
[0,\tau_\epsilon(\omega)[&\longrightarrow \mathscr{C}^\alpha\\
r &\longmapsto h_{r,\epsilon}(\omega)
\end{aligned}$ is continuous on $[0,\tau_\epsilon(\omega)[$ and $$\forall U \in [0,\tau_\epsilon(\omega)[,h_{U,\epsilon}(\omega)=P_U(\zeta_\epsilon*g)+\int_0^U\!P_{U-q}(\Delta((h_{q,\epsilon}(\omega))^3+3(h_{q,\epsilon}(\omega))^2\widetilde{X}_{q,\epsilon}(\omega)$$ $$+3h_{q,\epsilon}(\omega)\widetilde{Y}_{q,\epsilon}(\omega)+\widetilde{G}_{q,\epsilon}(\omega)-h_{q,\epsilon}(\omega)-\widetilde{X}_{q,\epsilon}(\omega)))\,\mathrm{d}q.$$
Noticing that for $\mathds{P}$-almost every $\omega \in \Omega$ and every $U \in [0,\tau_\epsilon(\omega)[,$ $$h_{U,\epsilon}(\omega)+\widetilde{X}_{U,\epsilon}=P_U(\zeta_\epsilon*g)+\widetilde{X}_{U,\epsilon}(\omega)+\int_0^U\!P_{U-q}(\Delta((h_{q,\epsilon}(\omega)+\widetilde{X}_{q,\epsilon}(\omega))^3$$ $$-3(h_{q,\epsilon}(\omega)+\widetilde{X}_{q,\epsilon}(\omega))\psi_\epsilon(q)-h_{q,\epsilon}(\omega)-\widetilde{X}_{q,\epsilon}(\omega)))\,\mathrm{d}q,$$ we complete the proof.
\item For all $(m,\epsilon) \in \mathbb{N}\times \mathbb{R}_+^*,$ let $\upsilon_{m,\epsilon}:=\min(\inf\{r \in [0,\min(\tau,\tau_\epsilon)[,\max(\sup_{q \in [0,r]}\Vert h_q\Vert_\alpha,$ $\sup_{q \in [0,r]}\Vert h_{q,\epsilon}\Vert_\alpha)\geq m\},m).$

It follows from Lemma \ref{lemma}, Theorem \ref{theorem1}, Theorem \ref{theorem2}, Theorem \ref{theorem3}, and Proposition \ref{proposition2} that $$\forall (m,\sigma) \in \mathbb{N}\times \mathbb{R}_+^*,\lim_{\epsilon\to0}\mathds{P}(\sup_{r \in [0,\upsilon_{m,\epsilon}]}\Vert h_{r,\epsilon}+\widetilde{X}_{r,\epsilon}-h_r-\widetilde{X}_r\Vert_{\alpha}>\sigma)=0.$$
Consequently, there exists a sequence $(\epsilon_q)_{q \in \mathbb{N}}$ in $\mathbb{R}_+^*$ such that $\lim_{m\to+\infty}\epsilon_m=0,\lim_{m\to+\infty}\upsilon_{m,\epsilon_m}=\tau,$ and $$\forall \sigma \in \mathbb{R}_+^*,\lim_{m\to+\infty}\mathds{P}(\sup_{r \in [0,\upsilon_{m,\epsilon_m}]}\Vert h_{r,\epsilon_m}+\widetilde{X}_{r,\epsilon_m}-h_r-\widetilde{X}_r\Vert_{\alpha}>\sigma)=0.$$
\end{enumerate}
\end{proof}

\textit{Department of Mathematics and Statistics, University of Ottawa, Ottawa,
K1N 6N5, Canada}

\textit{Email: jghaf099@uottawa.ca}
\end{document}